\numberwithin{lemma}{section}
\numberwithin{theorem}{section}
\numberwithin{corollary}{section}
\numberwithin{definition}{section}
\numberwithin{proposition}{section}
\numberwithin{remark}{section}
\numberwithin{example}{section}
\numberwithin{figure}{section}
\begin{document}

\title{Accelerated forward-backward method with fast convergence rate for nonsmooth convex optimization beyond differentiability
	%Insert your title here%\thanks{Grants or other notes
%about the article that should go on the front page should be
%placed here. General acknowledgments should be placed at the end of the article.}
}
%\subtitle{Do you have a subtitle?\\ If so, write it here}

\titlerunning{Accelerated forward-backward method for nonsmooth convex optimization}        % if too long for running head

\author{Wei Bian$^{*}$         \and
        Fan Wu %etc.
}

%\authorrunning{Short form of author list} % if too long for running head

\institute{
	Wei Bian \at
	School of Mathematics, Harbin Institute of Technology, Harbin {\rm 150001}, P.R. China\\
	Institute of Advanced Study in Mathematics, Harbin Institute of Technology, Harbin {\rm 150001}, P.R. China\\
             % first address \\
             % Tel.: +123-45-678910\\
             % Fax: +123-45-678910\\
              \email{bianweilvse520@163.com}           %  \\
%             \emph{Present address:} of F. Author  %  if needed
           \and
          Fan Wu \at
          School of Mathematics, Harbin Institute of Technology, Harbin {\rm 150001}, P.R. China\\
             % second address
             \email{wufanmath@163.com}
}

\date{Received: date / Accepted: date}
% The correct dates will be entered by the editor

\maketitle

\begin{abstract}
We propose an accelerated forward-backward method with fast convergence rate for finding a minimizer of a decomposable nonsmooth convex function over a closed convex set, and name it smoothing accelerated proximal gradient (SAPG) algorithm. The proposed algorithm combines the smoothing method with the proximal gradient algorithm with extrapolation $\frac{k-1}{k+\alpha-1}$ and $\alpha>3$. The updating rule of smoothing parameter $\mu_k$ is a smart scheme and guarantees the global convergence rate of $o(\ln^{\sigma}k/k)$ with $\sigma\in(\frac{1}{2},1]$ on the objective function values. Moreover, we prove that the sequence is convergent to an optimal solution of the problem. Furthermore, we introduce an error term in the SAPG algorithm to get the inexact smoothing accelerated proximal gradient algorithm. And we obtain the same convergence results as the SAPG algorithm under the summability condition on the errors. Finally, numerical experiments show the effectiveness and efficiency of the proposed algorithm.

\keywords{Nonsmooth convex optimization \and Smoothing method \and Accelerated algorithm with extrapolation \and Convergence rate \and Sequential convergence}
% \PACS{PACS code1 \and PACS code2 \and more}

\subclass{49J52 \and 65K05 \and 90C25 \and 90C30}
\end{abstract}

\section{Introduction}
\label{intro}
A popular optimization model that encompasses various convex problems arising in scientific and engineering
applications is the well-known composite minimization problem:
\begin{equation}\label{ob}
\min_{x\in\mathcal{X}}\quad f(x):=c(x)+g(x),
\end{equation}
where $\mathcal{X}$ is a nonempty closed convex subset of $\mathbb{R}^n$, $c:\mathbb{R}^n\rightarrow\mathbb{R}$ is a
continuous convex function on $\mathcal{X}$, and $g:\mathbb{R}^n\rightarrow\mathbb{R}\cup\{+\infty\}$ is a proper,
lower semicontinuous convex function. For a given $\vartheta>0$, the proximal mapping of $\vartheta g$
on $\mathcal{X}$, denoted by ${\rm prox}_{\vartheta g}$, is defined by
\begin{equation}\label{prox}
{\rm prox}_{\vartheta g}(y):=\arg\min_{x\in\mathcal{X}}\left\{\vartheta g(x)+\frac{1}{2}\|x-y\|^2\right\}.
\end{equation}
The efficient computation of such a proximal mapping is indispensable to a number of functions \cite{Adly2020,Beck2018,Bian2020,Lu2014}. Throughout
this paper, we focus on the case that the proximal operator of $g$ on $\mathcal{X}$ can be calculated effectively, and assume optimal solution set $\arg\min_{\mathcal{X}}f$ of (\ref{ob})
is nonempty.

In recent decades, first-order method is the leading method for solving large-scale optimization problems in real-world
applications, such as compressed sensing \cite{Candes1,Donoho}, image sciences \cite{Beck-Teboule,Chambolle},
variable selection \cite{Beck2018-MP}, etc.
Due to the decomposition structure of the objective function in (\ref{ob}), a mother scheme to solve it is the
forward-backward splitting method \cite{Bruck,Passty}, which is often called the proximal gradient (PG) method when
it is used to solve the convex programming. The main computational efforts of PG method in each iteration are the evaluations
of the gradient of the smooth part and the proximal calculation of the other part in the objective function.
Fukushima and Mine \cite{Fukushima} gave an earliest work on the analysis of PG method, and recently it has been widely
studied in many literatures. However, the original PG method is often slow, which is with the convergence rate of $O({1}/{k})$
on the objective function values. Then, various accelerated methods have been attempted to PG method, such as the fast iterative
shrinkage-thresholding algorithm (FISTA) proposed by Beck and Teboulle \cite{Beck-Teboule}, which extends the seminal work of
accelerated gradient algorithm for solving a class of smooth convex minimization introduced by Nesterov in 1983 \cite{Nesterov1983}.
 When $c$ is continuously differentiable, a typical accelerated proximal gradient (APG) method for (\ref{ob}) is to perform an extrapolation on the current iteration and
takes the following general form
\begin{equation}\label{Attouch}
\left\{
\begin{aligned}
& y^k=x^k+\beta_k(x^k-x^{k-1}) \\
& x^{k+1}= {\rm prox}_{\vartheta g}(y^k-\vartheta\nabla c(y^k)),
\end{aligned}\right.
\end{equation}
where $\vartheta$ is a positive constant depending on the Lipschitz constant of $\nabla c$, $\{\beta_k\}\subseteq[0,1]$ are the extrapolation
coefficients.
By setting $\beta_k=\frac{t_k-1}{t_{k+1}}$ with $t_1=1$ and $t_{k+1}=\frac{\sqrt{t_k^2+1}+1}{2}$,
FISTA exhibits a faster convergence rate of $O({1}/{k^2}) $ on the objective function values.
Based on the Nesterov's extrapolation scheme, the accelerated process in (\ref{Attouch})
has become increasingly important and be proven to be particularly useful in the first-order methods for solving the structured convex
minimization problems. Most recently, after some simplification,
Chambolle and Dossal \cite{Chambolle-Dossal} first proved the sequence convergence of FISTA, while it was independently settled in \cite{Attouch-MP}
for (\ref{Attouch}) with $\beta_k=\frac{k-1}{k+\alpha-1}$ and $\alpha>3$. What's more, Attouch and Peypouquet \cite{Attouch-Peypouquet} improved the
convergence rate of (\ref{Attouch}) on the function values from $O(1/k^2)$ to $o(1/k^2)$ by setting $\alpha>3$ instead of $\alpha=3$. Besides these
results, we refer the readers to \cite{Attouch2020C,Attouch2020F,Attouch2019,Attouch2020Convergence} and the references therein for more complementary and interesting results on the accelerated algorithms
based on the Nesterov's extrapolation techniques.

Despite we can loose the smoothness of $g$ in (\ref{ob}) due to its computability of proximal operator, a crucial and standard assumption common
to all the above mentioned PG and APG algorithms is the Lispchitz continuity of $\nabla c$. One may think that we can use the alternating direction
of multipliers (ADM) scheme \cite{Boyd2011,Hong2017,Yang2016} when both $c$ and $g$ are nonsmooth convex function, but have the computable proximal operator. Although ADM
scheme is effective for some composite models, there are several serious difficulties in doing this as discussed in \cite{Bauschke}. Recently, Bauschke,
Bolte and Teboulle \cite{Bauschke} derived an appropriate descent lemma on replacing the upper quadratic approximation of the smooth function by a
proximity measure with the Bregman distance. With this lemma, \cite{Bauschke} proved that the PG algorithm owned the $O({1}/{k})$ convergence rate on the objective
values when $c$ is convex and continuously differentiable (not necessarily with a global Lipschitz gradient). With some additional assumptions on the
Bregman proximal distance, the sequence convergence is also established in \cite{Bauschke}.
Almost at the same time, Nguyen \cite{Nguyen} independently analyzed the convergence of the PG method based on the Bregman distance for composite
minimization problems in general reflexive Banach spaces.
However, there are many problems in applications that can not be expressed by (\ref{ob}) with the sum of a continuously differentiable function satisfying the
conditions in \cite{Bauschke} and a function with computable proximal operator as the objective function. Some popular and interesting examples are listed in Section \ref{section2}.

For (\ref{ob}) with a general nonsmooth convex objective function, there are very few literatures. Based on the method in \cite{Nesterov1983} and an
appropriate smooth $\epsilon$-approximation of the initial nonsmooth objective function, Nesterov \cite{Nesterov2005} improved the efficiency estimate
of the order from $O\left({1}/{\epsilon^2}\right)$ to $O({{1}/{\epsilon}})$ for finding an $\epsilon$-solution $x^{\epsilon}$, i.e.
\[
f(x^{\epsilon})-\min f\leq\epsilon.
\]
And then this work was extended to the saddle-point problem arising in finding a Nash equilibria for games with the same order of efficiency estimate \cite{Hoda}.
Soon after, Chen \cite{Chen-MP} studied a class of smoothing methods for solving the constrained nonsmooth nonconvex optimization problem.
Based on the smoothing method in \cite{Chen-MP}, Zhang and Chen in \cite{zhang2009} proposed a smoothing projected gradient method for minimizing a nonsmooth nonconvex problem on a closed convex feasible set, and showed that any accumulation point generated by the method is a stationary point of the problem associated with a smoothing function. Recently, Bian and Chen \cite{Bian-Chen-SINMU} came up with a smoothing proximal gradient algorithm for the constrained $\ell_0$ penalized nonsmooth convex regression problem. In particular, the authors in \cite{Bian-Chen-SINMU} established that the local convergence rate of $o(1/k^{\tau})$ with any $\tau\in(0,{1}/{2})$ on the objective function values and the iterates converges to a local minimizer of the considered problem.
Similarly, inspired by the effect of smoothing method, Zhang and Chen \cite{zhang2020} proposed a smoothing active set method for linearly constrained non-Lipschitz nonconvex optimization and proved the local convergence of the method.
It is worth noting that Bian in \cite{Bian2020} independently developed a smoothing fast iterative shrinkage-thresholding algorithm based on the extrapolation coefficients of FISTA for solving problem \eqref{ob} and proved that the global convergence rate of the objective function values is $O(\ln k/k)$. Without the sequential convergence, only the optimality of the accumulated points of the iterates is established in \cite{Bian2020}. Many numerical algorithms based on the smoothing methods for solving the nonsmooth optimization problem have been studied extensively \cite{Chen2018A,Liu2016A,Xu2015S}.
More recently, on the basis of the regularity properties of the Moreau envelope of a proper convex lower semicontinuous function $f$, a class of APG algorithms \cite{Adly2020,Attouch2020C,Attouch2020F} for solving smooth convex optimization problems are extended to nonsmooth convex optimization problems. However, the resulting algorithms involve the proximal operator of $f$. We remark that the proximal operator of most objective functions doesn't have the closed-form solution, which isn't needed in the proposed algorithm of this paper.

For the numerical implementation of the algorithms, it is important to study the stability with respect to the computational errors or perturbations of the numerical algorithm. Being based on the well-posed dynamic systems with a small perturbation term, there are many references on the stability properties of the accelerated forward-backward algorithms with perturbation obtained by the implicit/explicit finite difference, such as \cite{Adly2020,Attouch-MP,Attouch-Peypouquet,Aujol,Villa}. It is worth emphasizing that the convergence results of these inexact algorithms are parallel to those of the algorithms in the unperturbed case under some condition on the perturbations and errors.
%On the other hand, it holds that the Moreau envelope $f_{\lambda}$ of a proper convex lower semicontinuous function $f$ is convex continuously differentiable, and the infimal value and the optimal solution set are preserved by taking the Moreau envelope. In view of these facts, the researchers replace $f$ by $f_{\lambda}$ in a class of inertial proximal algorithm.

Note that the study on solving (\ref{ob}) remains some serious difficulties and challenges that we now briefly sketch two points. First,
the global convergence rates of the APG method for (\ref{ob}) with a general (possible nonsmooth) convex function $c$ has not been extensively considered before. Though a good convergence rate on the objective function values is estimated in \cite{Nesterov2005}, any accumulation point of the algorithm in it is an $\epsilon$-solution, where $\epsilon$ is an appropriately selected fixed
positive parameter in the algorithm.
Second, as far as we know, the sequential convergence of APG methods for (\ref{ob}) in this case have not be proved before.
In this paper, we specialize the study of problem (\ref{ob}) in the case that $c$ is a nonsmooth convex function, and aim to
extend the results in \cite{Attouch-Peypouquet} to a more general composite convex minimization problem modeled by (\ref{ob}). Our main contributions are to
propose an APG algorithm for solving (\ref{ob}), which not only owns a fast global convergence rate on the objective values, but also possesses the sequence
convergence.
As a first attempt to design, we aim to introduce the efficient smoothing techniques into the APG method to overcome the nonsmoothness of $c$ in (\ref{ob}).
The challenge of improving the convergence rate of proposed algorithm is the updating method for the smoothing parameter.
After adjusting numerous times and learning the techniques on the convergence analysis in \cite{Attouch-MP,Attouch-Peypouquet}, we give an updating scheme of the smoothing
parameter, which not only let the proposed algorithm own the
global convergence rate of $o(\ln^{\sigma}k/k)$ with any $\sigma\in({1}/{2},1]$ on the objective function values, but also have the global sequential convergence.
In particular, we consider the effect of the errors in the proposed algorithm, and give an sufficient condition on the errors to guarantee the established convergence results of the algorithm without perturbation still hold.
We hope, this work will give some insight on improving the corresponding algorithms that need the Lipschitz continuous gradient as a basic assumption to
more general problems in applications.

The rest of this paper is organized as follows. Section \ref{section2} gives the definition of smoothing function and some necessary preliminary results, and lists several examples of modeling with \eqref{ob} in practical applications. In Section \ref{section3}, we design an accelerated algorithm, named smoothing accelerated proximal gradient (SAPG) algorithm, for solving problem \eqref{ob} and derive the main convergence results of it. We also discuss the stability of SPAG algorithm with respect to errors on the calculation of the gradient in this section. Section \ref{section4} demonstrates the performance of the proposed algorithm by some numerical experiments.

%\textbf{Notations:}
\paragraph{Notations:}
Throughout the paper, $\mathbb{R}^n$ is a $n$ dimensional Euclidean space equipped with the scalar product $\langle\cdot , \cdot\rangle$ and the Euclidean norm $\| \cdot \|$. We define $\mathbb{N}:=\{0, 1, 2, \cdots\}$. $\mathbb{R}_{+}$ denotes the set of all nonnegative real numbers. The notations $\mathbb{R}_{++}$ and $\| \cdot \|_1$ denote the set of all positive real numbers and the $\ell_1$ norm, respectively. For a vector $x\in \mathbb{R}^n$ and a nonempty closed convex set $\mathcal{X}\subseteq\mathbb{R}^n$, the projection operator to set $\mathcal{X}$ at $x$ is defined by $P_\mathcal{X}(x):=\arg\min\{\|x-z\|: z\in\mathcal{X}\}$.
\section{Preliminary results and examples}\label{section2}
As discussed in Introduction, the main difficulty in solving (\ref{ob}) by the PG and APG method is the nonsmoothness of $c$.
When $c$ is nonsmooth or $\nabla c$
is not globally Lipschitz continuous, an direct idea is to use the smoothing method, which plays a central role in our analysis. In this paper,
we will propose an algorithm with the smoothing function defined in \cite{Bian-Chen-SINMU}, which approximates the nonsmooth convex function $c$ by a class of smooth convex functions.
\begin{definition}\label{defn1}\cite{Bian-Chen-SINMU}
	For convex function $c$ in (\ref{ob}), we call $\tilde{c}:\mathbb{R}^n\times\mathbb{R}_{+}\rightarrow\mathbb{R}$ a smoothing function of $c$, if
	$\tilde{c}$ satisfies the following conditions:
	\begin{itemize}
		\item [{\rm (i)}] for any fixed $\mu>0$, $\tilde{c}(\cdot,\mu)$ is continuously differentiable on $\mathbb{R}^n$;
		\item [{\rm (ii)}] $\lim_{z\to x,\mu\downarrow0}\tilde{c}(z,\mu)=c(x)$, $\forall x\in\mathbb{R}^n$;
		\item [{\rm (iii)}] (gradient consistence) $\{\lim_{z\rightarrow x,\mu\downarrow0}\nabla_{z}\tilde{c}(z,\mu)\}\subseteq\partial c(x)$, $\forall x\in\mathcal{X}$;
		\item [{\rm (iv)}] for any fixed $\mu>0$, $\tilde{c}(\cdot,\mu)$ is convex on $\mathcal{X}$;
		\item [{\rm (v)}] there exists a $\kappa>0$ such that
		\begin{equation}\label{eq-s1}
		|\tilde{c}({{x}},\mu_2)-\tilde{c}({{x}},\mu_1)|\leq\kappa|\mu_1-\mu_2|,\quad \forall {{x}}\in\mathcal{X},\,\mu_1,\mu_2\in\mathbb{R}_{++};
		\end{equation}
		\item [{\rm (vi)}] there exists an $L>0$ such that $\nabla_x\tilde{c}({\cdot},\mu)$ is Lipschitz continuous on $\mathcal{X}$ with factor $L\mu^{-1}$
		for any fixed $\mu\in\mathbb{R}_{++}$.
	\end{itemize}
\end{definition}

Combining properties (ii) and (v) in Definition \ref{defn1}, we have
\begin{equation}\label{eq-s2}
|\tilde{c}(x,\mu)-c(x)|\leq\kappa\mu,\quad \forall {{x}}\in\mathcal{X},\,\mu\in\mathbb{R}_{++}.
\end{equation}

The study of smooth
approximations for various specialized nonsmooth functions has a long history and rich theoretical results \cite{Chen-MP,F-P-Book,Nesterov2005,Rock-Wets,HUL93}.
Items (i)-(iii) are basic conditions in the definition of smoothing function \cite{Chen-MP}, which are necessary for the effectiveness of the smoothing methods
in solving the corresponding nonsmooth problems. Item (iv) states that the smoothing function $\tilde{c}(\cdot,\mu)$ maintains the
convexity of $c$ for any fixed $\mu\in\mathbb{R}_{++}$.
Item (v) and (vi) ensure the global Lipschitz continuity of $\tilde{c}(x,\cdot)$ on $\mathbb{R}_{++}$ for any fixed $x\in\mathbb{R}^n$,
and the global Lipschitz continuity of $\nabla_x\tilde{c}(\cdot,\mu)$ for any fixed $\mu\in\mathbb{R}_{++}$, respectively.
\begin{remark}
	In particular, we want to mention in advance that the values of $\kappa$ and $L$ in Definition \ref{defn1} are not needed in the following proposed
	algorithm, but only used in the convergence analysis.
\end{remark}

Now, let us look at some examples in applications modeled by (\ref{ob}), for which we can construct a smoothing function for the first function and
calculate the proximal operator for the second function in its objective function. We can also refer to \cite{Nesterov2005} for some more examples in applications.
\begin{example}
	To find a sparse solution in $\mathcal{X}$ satisfying $c(x)\approx0$, one often considers the following $\ell_1$ regularized sparse optimization model
	\[\min_{x\in\mathcal{X}}\quad c(x)+\lambda\|x\|_1,\]
	where $c$ is the loss function to characterize the data fitting and $\lambda>0$ is the penalty parameter.
	Notice that the proximal operator of $\ell_1$ function has a closed form expression \cite{Parikh}.
	A notable nonsmooth convex loss function in linear regression problem is the $\ell_1$ function, which takes the following form
	\begin{equation}\label{l1-l1}
	c(x):=\|Ax-b\|_1,
	\end{equation}
	with $A\in\mathbb{R}^{m\times n}$ and $b\in\mathbb{R}^m$.
	As pointed out in \cite{Fan2001}, $\ell_1$ loss function is nonsmooth, but more robust and has stronger capability of outlier-resistant
	than the least square loss function in the linear regression problems. Another important application is the loss function in censored
	regression problem, which is often in the form of
	\begin{equation}\label{max-l1}
	c(x):=\left\|\max\{Ax,0\}-b\right\|_q^q
	\end{equation}
	with $q\in[1,2]$. Function $c$ in \eqref{max-l1} is also a convex but nonsmooth function.
	Besides, both the check loss function in penalized quantile regression \cite{Fan2014,Koenker} and the negative log-quasi-likehood
	loss function \cite{Fan2001} are nonsmooth convex functions.
	One can consult \cite{Bian-Chen-SINMU} for the construction of smoothing functions satisfying Definition \ref{defn1}, including the
	smoothing functions of $c$ in (\ref{l1-l1}) and (\ref{max-l1}).
\end{example}
\begin{example}
	Since $g$ can be a possible nonsmooth extended valued function, we can let $g$ be the indicator function of a closed convex subset
	$\mathcal{Y}$ of $\mathbb{R}^n$, i.e.
	\[\delta_{\mathcal{Y}}(x)=\left\{\begin{aligned}
	&0&&\mbox{if $x\in\mathcal{Y}$}\\
	&\infty&&\mbox{if $x\not\in\mathcal{Y}$}.
	\end{aligned}\right.\]
	Here, $\delta_{\mathcal{Y}}$ is a proper, lower semicontinuous convex function, and ${\rm prox}_{\vartheta\delta_{\mathcal{Y}}}$ is the
	projection operator onto $\mathcal{Y}$.
	Then, problem (\ref{ob}) is reduced to the following constrained (maybe nonsmooth) convex minimization problem
	\[\min_{x\in\mathcal{X}\cap\mathcal{Y}}\quad c(x).\]
\end{example}
\begin{example} Consider the following constrained convex optimization problem
	\begin{equation}\label{ex3}
	\begin{aligned}
	&\min\quad &&g(x)\\
	&\mbox{\,\,s.t.}&&x\in\mathcal{X},\,h_i(x)\leq0,\,i=1,\ldots,r,
	\end{aligned}
	\end{equation}
	where $g$ and $h_i:\mathbb{R}^n\rightarrow\mathbb{R}$, $i=1,\ldots,r$, are convex functions. It is well known that most exact penalty functions are nonsmooth.
	Based on the exact penalty method, under some proper conditions \cite{Clarke,Nocedal,Bian-Xue-CS}, problem (\ref{ex3}) can be equivalent to
	\begin{equation}\label{ex3-2}
	\begin{aligned}
	&\min\quad &&\lambda\sum_{i=1}^r\max\{h_i(x),0\}+g(x)\\
	&\mbox{\,\,s.t.}&&x\in\mathcal{X},
	\end{aligned}
	\end{equation}
	where $\lambda$ is an exact penalty parameter. Problem (\ref{ex3-2}) is also a special case of (\ref{ob}) with
	\[c(x):=\lambda\sum_{i=1}^r\max\{h_i(x),0\},\]
	which is a nonsmooth convex function and can have a smoothing function satisfying Definition \ref{defn1}.
\end{example}

\section{SAPG algorithm and convergence analysis}\label{section3}
As inspired by the success in \cite{Attouch-Peypouquet}, we will propose an accelerated algorithm for solving (\ref{ob})
based on the scheme of (\ref{Attouch}) with $\beta_k=\frac{k-1}{k+\alpha-1}$ and $\alpha>3$. Equipped with the smoothing
function of $c$ defined in Definition \ref{defn1}, we can now develop an accelerated proximal gradient algorithm for (\ref{ob})
and build the global convergence analysis including the fast convergence rate on the objective function values and the sequential convergence of iterates.
In particular, it owns the global convergence rate of $o(\ln^{\sigma}k/k)$ with any $\sigma\in(\frac{1}{2},1]$.

For easy of reference and correspond to its structure, we call the proposed algorithm smoothing accelerated proximal gradient (SAPG) algorithm in this paper. In this section, we always let $\tilde{c}$ a smoothing function of $c$ defined in Definition \ref{defn1} with positive parameters $\kappa$ and $L$ in item (v) and (vi), respectively.
In what follows,
$\nabla \tilde{c}(x,\mu)$ means the gradient of $\tilde{c}$ with respect to $x$ for simplicity.

\subsection{SAPG algorithm}
Set
\begin{equation}\label{sob}
\tilde{f}(x,\mu):=\tilde{c}(x,\mu)+g(x).
\end{equation}
Notice that $\tilde{f}$ can be a nonsmooth function, since we do not assume the smoothness of $g$.
However, for any fixed $\mu>0$, $\tilde{f}(\cdot,\mu)$ is with the composite structure, whose first term is a Lipschitz continuously
differentiable convex function and the second term is a proper lower semicontinuous convex function with computable proximal operator.
This provides a possibility
of adopting the PG and APG method on it.
Different from the work in \cite{Nesterov2005}, which is for problem $\min_{x\in\mathcal{X}}\tilde{f}(x,\mu)$
with a fixed and appropriate selected value of $\mu$, we will let the parameter $\mu$ tend to $0$ asymptotically and give a global convergence rate on the objective function
values. The updating method
of $\mu$ plays a key role in the global convergence analysis and affects the convergence rate directly. We are now ready to present the proposed SAPG
algorithm for solving (\ref{ob}). See Algorithm \ref{algorithm1}.

\begin{algorithm}[!h]
	\caption{Smoothing Accelerated Proximal Gradient (SAPG) Algorithm }\label{algorithm1}
	\begin{itemize}
		\item[\textbf{Input:}]
		Take initial point $x^{-1}=x^0\in\mathcal{X}$, $\mu_0\in\mathbb{R}_{++}$ and $\gamma_0>0$. Choose parameters $\eta\in(0,1)$, $\alpha>3$ and $\sigma\in\left(\frac{1}{2}, 1\right]$.
		Set $k=0$.
		\item[\textbf{Step 1:}] Set $\hat{\gamma}_{k+1}=\gamma_k$ and compute
		\begin{equation}\label{algo1}
		y^k=x^k+\frac{k-1}{k+\alpha-1}(x^k-x^{k-1}) ,
		\end{equation}
		\begin{equation}\label{algo2}
		\mu_{k+1}=\frac{\mu_0}{(k+\alpha-1)\ln^{\sigma}(k+\alpha-1)}.
		\end{equation}
		\item[\textbf{Step 2:}] Compute
		\begin{equation}\label{algo3}
		\hat{x}^{k+1}={\rm prox}_{\hat{\gamma}_{k+1}{\mu_{k+1}}g}(y^k-\hat{\gamma}_{k+1}\mu_{k+1}\nabla\tilde{c}(y^k,\mu_{k+1})). \end{equation}
		\item[\textbf{Step 3:}] If $\hat{x}^{k+1}$ satisfies
		\begin{equation}\label{algo4}
		\begin{aligned}
		\tilde{c}(\hat{x}^{k+1},\mu_{k+1})
		\leq\tilde{c}({y}^{k},\mu_{k+1})+&\langle\nabla\tilde{c}({y}^{k},\mu_{k+1}),
		\hat{x}^{k+1}-y^k\rangle\\
		&+\frac{1}{2}(\hat{\gamma}_{k+1}\mu_{k+1})^{-1}\|\hat{x}^{k+1}-y^{k}\|^2,
		\end{aligned}\end{equation}
		let $\gamma_{k+1}=\hat{\gamma}_{k+1}$, $x^{k+1}=\hat{x}^{k+1}$, increment $k$ by one and return to \textbf{Step 1}.\\
		Otherwise, let $\hat{\gamma}_{k+1}=\eta\hat{\gamma}_{k+1}$ and return to \textbf{Step 2}.\\
	\end{itemize}
	
\end{algorithm}
It is interesting to see that (\ref{algo1}) and (\ref{algo3}) are the iterations of general APG method for
$\tilde{f}(x,\mu_{k+1})$, which indicates that the SAPG algorithm shares the same structural decomposition principle
and extrapolation as the usual APG algorithm in \cite{Attouch-Peypouquet}.
Similar as the work in \cite{Bian-Chen-SINMU}, (\ref{algo4}) is a simple line search for verifying the adaptability
of $(\hat{\gamma}_{k+1}\mu_{k+1})^{-1}$ on the Lipschitz constant of $\nabla \tilde{c}(\cdot,\mu_{k+1})$ between $y^k$
and $\hat{x}^{k+1}$. It should be carefully noted that smoothing parameter $\{\mu_k\}$ is strictly monotone decreasing and tends to $0$ as $k\to\infty$ if the SAPG algorithm is well-defined for all $k\in\mathbb{N}$.

\subsection{Some basic estimations}
We start with some basic preliminary estimations for the SAPG algorithm.\
Let $\{x^k\}$, $\{y^k\}$, $\{\gamma_k\}$ and $\{\mu_k\}$ be the sequences generated by the SAPG algorithm.
Our convergence analysis follows some basic ideas in \cite{Attouch-Peypouquet} and extends it to a more general nonsmooth case.

Set
$$Q(x,y,\mu,\gamma):=\tilde{c}(y,\mu)+\langle\nabla\tilde{c}(y,\mu),x-y\rangle+\frac{1}{2}(\gamma\mu)^{-1}\|x-y\|^2+g(x).$$
For fixed $y$, $\mu$ and $\gamma$, $Q(\cdot,y,\mu,\gamma)$ is strongly convex with modulus $(\gamma\mu)^{-1}$, and admits a
unique global minimizer on $\mathcal{X}$, which is denoted by $p(y,\mu,\gamma)$, i.e.
$$p(y,\mu,\gamma):=\arg\min_{x\in\mathcal{X}}Q(x,y,\mu,\gamma).$$
Then,
it ensures that
\begin{equation}\label{eq2-1}
Q(x,y,\mu,\gamma)\geq Q(p(y,\mu,\gamma),y,\mu,\gamma)+\frac{1}{2}(\gamma\mu)^{-1}\|x-p(y,\mu,\gamma)\|^2,\quad \forall x\in\mathcal{X}.
\end{equation}

Invoking the formulation of proximal operator in (\ref{prox}), (\ref{algo3}) can be expressed as
\begin{equation}\label{eq-p}
\hat{x}^{k+1}=p(y^k,\mu_{k+1},\gamma_{k+1}).
\end{equation}
\begin{lemma}\label{lemma1}
	The SAPG algorithm is well-defined. Moreover, the sequences $\{x^k\}$, $\{\gamma_k\}$ and $\{\mu_k\}$ satisfy
	\begin{itemize}
		\item[{\rm(i)}] $\{\gamma_k\}$ is non-increasing and lower bounded by $\underline{\gamma}:=\min\{{\gamma}_0,\eta L^{-1}\}$;
		\item[{\rm(ii)}] $\{\mu_k\}$ is monotone decreasing and $\lim_{k\rightarrow\infty}\mu_k=0$;
		\item[{\rm(iii)}] for any $k\geq0$, $x^k\in\mathcal{X}$.
	\end{itemize}
\end{lemma}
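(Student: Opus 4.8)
The plan is to run a simultaneous induction on $k$ that establishes item (iii) together with the well-posedness of the $k$-th iteration; items (i) and (ii) then follow afterwards, the first from the backtracking rule and the second directly from the closed-form update \eqref{algo2}. The base case is trivial: $x^{-1}=x^{0}\in\mathcal X$ by the choice of initial point. Assuming $x^{k-1},x^{k}\in\mathcal X$, the extrapolated point $y^{k}$ in \eqref{algo1} is a well-defined point of $\mathbb R^{n}$, $\mu_{k+1}>0$ by \eqref{algo2}, and for any trial value $\hat\gamma_{k+1}>0$ the subproblem \eqref{algo3} has a unique solution (its objective is proper, lower semicontinuous and strongly convex), which lies in $\mathcal X$ because the proximal map \eqref{prox} minimizes over $\mathcal X$; hence $\hat x^{k+1}\in\mathcal X$, and once Step 3 accepts a value we set $x^{k+1}=\hat x^{k+1}\in\mathcal X$. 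This closes the induction and proves (iii).

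The substance of the argument is the termination of the backtracking loop in Step 3, and this is where item (vi) of Definition \ref{defn1} enters. Because $\nabla\tilde c(\cdot,\mu_{k+1})$ is Lipschitz with constant $L\mu_{k+1}^{-1}$, the descent lemma applied to $\tilde c(\cdot,\mu_{k+1})$ along the segment joining $y^{k}$ and $\hat x^{k+1}$ yields
\[
\tilde c(\hat x^{k+1},\mu_{k+1})\le\tilde c(y^{k},\mu_{k+1})+\langle\nabla\tilde c(y^{k},\mu_{k+1}),\,\hat x^{k+1}-y^{k}\rangle+\tfrac12 L\mu_{k+1}^{-1}\|\hat x^{k+1}-y^{k}\|^{2}.
\]
Comparing with \eqref{algo4}, the acceptance test is certainly satisfied once $\tfrac12(\hat\gamma_{k+1}\mu_{k+1})^{-1}\ge\tfrac12 L\mu_{k+1}^{-1}$, i.e. once $\hat\gamma_{k+1}\le L^{-1}$. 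Since the loop starts at $\hat\gamma_{k+1}=\gamma_{k}$ and is multiplied by $\eta\in(0,1)$ after every rejection, this inequality is reached after finitely many reductions, so the loop terminates and the SAPG algorithm is well-defined.

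For (i), write $\gamma_{k+1}=\eta^{\,j}\gamma_{k}$ where $j\ge0$ is the number of reductions performed at iteration $k$; then $\gamma_{k+1}\le\gamma_{k}$, so $\{\gamma_{k}\}$ is non-increasing. If $j\ge1$, the value $\eta^{\,j-1}\gamma_{k}$ was rejected, so by the contrapositive of the displayed estimate $\eta^{\,j-1}\gamma_{k}>L^{-1}$, whence $\gamma_{k+1}=\eta\,(\eta^{\,j-1}\gamma_{k})>\eta L^{-1}$; if $j=0$ then $\gamma_{k+1}=\gamma_{k}$. In both cases $\gamma_{k+1}\ge\min\{\gamma_{k},\eta L^{-1}\}$, and an induction starting from $\gamma_{0}$ gives $\gamma_{k}\ge\min\{\gamma_{0},\eta L^{-1}\}=\underline\gamma$ for all $k$.

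Item (ii) is read off from \eqref{algo2}: since $\alpha>3$ we have $k+\alpha-1>2$ for every $k\ge0$, so $\ln(k+\alpha-1)>\ln2>0$ and each $\mu_{k+1}>0$; the map $t\mapsto(t+\alpha-1)\ln^{\sigma}(t+\alpha-1)$ is strictly increasing on $[0,\infty)$, giving $\mu_{k+1}<\mu_{k}$ for $k\ge1$, while $\mu_{1}=\mu_{0}/[(\alpha-1)\ln^{\sigma}(\alpha-1)]<\mu_{0}$ because $(\alpha-1)\ln^{\sigma}(\alpha-1)>2(\ln2)^{\sigma}>1$ when $\sigma\le1$; hence $\{\mu_{k}\}$ is monotone decreasing, and $\mu_{k}\to0$ since the denominator in \eqref{algo2} diverges. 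Of the four claims, (ii) and (iii) are routine; the one point that genuinely needs care is the termination of the line search, and — since the extrapolated point $y^{k}$ from \eqref{algo1} need not lie in $\mathcal X$ — the legitimacy of invoking the descent lemma along the entire segment $[y^{k},\hat x^{k+1}]$, which should be justified using the global $C^{1}$ regularity (item (i)) and the Lipschitz bound (item (vi)) of Definition \ref{defn1}.
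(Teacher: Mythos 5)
Your proposal is correct and follows essentially the same route as the paper: termination of the backtracking loop via the descent lemma and Definition \ref{defn1}-(vi) (acceptance once $\hat\gamma_{k+1}\le L^{-1}$), the resulting lower bound $\gamma_k\ge\min\{\gamma_0,\eta L^{-1}\}$, and direct verification of (ii) and (iii) from \eqref{algo2} and the proximal step; you simply supply the details the paper leaves implicit. The caveat you raise at the end --- that Definition \ref{defn1}-(vi) only guarantees the Lipschitz bound on $\mathcal X$ while the segment $[y^k,\hat x^{k+1}]$ may leave $\mathcal X$ --- is a genuine subtlety that the paper's own one-line proof also glosses over, so flagging it is a point in your favor rather than a defect of your argument.
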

\begin{proof}
	Invoking Definition \ref{defn1}-(vi), (\ref{algo4}) holds when $\gamma_{k+1}^{-1}\geq L$, which implies that the
	total updating iterations for returning to step 2 is at most $1+[-\log_{\eta}(L{\gamma}_0)]$ times, where $[a]$ means the largest
	positive integer such that $[a]\leq a$. Combining this with (\ref{eq-p}), the SAPG algorithm is well-defined
	and that $\gamma_k\geq\underline{\gamma}:=\min\{{\gamma}_0,\eta L^{-1}\}$.
	
	From (\ref{algo2}) and (\ref{algo3}), we can directly verify the results in items (ii) and (iii).
\end{proof}

\begin{lemma}\label{lemma2}
	For any $x\in\mathcal{X}$ and $k\in\mathbb{N}$, it holds that
	\begin{equation}\label{lem2-3}
	\begin{aligned}
	\tilde{f}(x^{k+1},\mu_{k+1})\leq&\tilde{f}(x,\mu_{k+1})
	+(\gamma_{k+1}\mu_{k+1})^{-1}\langle y^k-x^{k+1},y^k-x\rangle\\
	&-\frac{1}{2}(\gamma_{k+1}\mu_{k+1})^{-1}\|x^{k+1}-y^{k}\|^2.
	\end{aligned}\end{equation}
\end{lemma}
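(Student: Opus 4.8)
The plan is to recognize \eqref{lem2-3} as the standard one‑step estimate for a proximal gradient step with a backtracked step size, and to obtain it by sandwiching the surrogate $Q(\cdot,y^k,\mu_{k+1},\gamma_{k+1})$ between $\tilde f(\cdot,\mu_{k+1})$ (from above, along the accepted step, via the line search \eqref{algo4}) and a linearization of $\tilde f(\cdot,\mu_{k+1})$ (from below, by convexity of $\tilde c(\cdot,\mu_{k+1})$), and then rewriting the quadratic remainders with a polarization identity.

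Concretely, first I would add $g(\hat x^{k+1})$ to both sides of the accepted inequality \eqref{algo4} and recall $x^{k+1}=\hat x^{k+1}$, $\gamma_{k+1}=\hat\gamma_{k+1}$; by the definition of $Q$ this gives $\tilde f(x^{k+1},\mu_{k+1})\le Q(x^{k+1},y^k,\mu_{k+1},\gamma_{k+1})$. Next, by \eqref{eq-p} we have $x^{k+1}=p(y^k,\mu_{k+1},\gamma_{k+1})$, so the strong‑convexity estimate \eqref{eq2-1} evaluated at this minimizer yields, for every $x\in\mathcal X$,
\[
Q(x^{k+1},y^k,\mu_{k+1},\gamma_{k+1})\le Q(x,y^k,\mu_{k+1},\gamma_{k+1})-\frac{1}{2}(\gamma_{k+1}\mu_{k+1})^{-1}\|x-x^{k+1}\|^2 .
\]
Then I would expand $Q(x,y^k,\mu_{k+1},\gamma_{k+1})$ from its definition and use the convexity of $\tilde c(\cdot,\mu_{k+1})$ from Definition \ref{defn1}(iv), namely $\tilde c(y^k,\mu_{k+1})+\langle\nabla\tilde c(y^k,\mu_{k+1}),x-y^k\rangle\le\tilde c(x,\mu_{k+1})$, to bound it by $\tilde f(x,\mu_{k+1})+\frac{1}{2}(\gamma_{k+1}\mu_{k+1})^{-1}\|x-y^k\|^2$. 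Chaining the three inequalities produces
\[
\tilde f(x^{k+1},\mu_{k+1})\le\tilde f(x,\mu_{k+1})+\frac{1}{2}(\gamma_{k+1}\mu_{k+1})^{-1}\bigl(\|x-y^k\|^2-\|x-x^{k+1}\|^2\bigr).
\]
Finally, writing $x-x^{k+1}=(x-y^k)+(y^k-x^{k+1})$ and expanding the square gives the identity $\frac{1}{2}\|x-y^k\|^2-\frac{1}{2}\|x-x^{k+1}\|^2=\langle y^k-x^{k+1},y^k-x\rangle-\frac{1}{2}\|x^{k+1}-y^k\|^2$, and substituting this above is exactly \eqref{lem2-3}.

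I do not expect a genuine obstacle here: the estimate is a direct combination of \eqref{algo4}, \eqref{eq-p}, \eqref{eq2-1} and Definition \ref{defn1}(iv). The only points requiring a little care are (a) that $x^{k+1}\in\mathcal X$ (Lemma \ref{lemma1}(iii)) and $x\in\mathcal X$ by hypothesis, so that \eqref{eq2-1} and the subgradient inequality from convexity are legitimately applicable, and (b) the sign/centering bookkeeping in the polarization step, which must be arranged so that the cross term comes out as $\langle y^k-x^{k+1},y^k-x\rangle$ and the leftover quadratic as $-\frac{1}{2}(\gamma_{k+1}\mu_{k+1})^{-1}\|x^{k+1}-y^k\|^2$.
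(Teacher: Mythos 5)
Your proposal is correct and follows essentially the same route as the paper: both combine the line-search inequality \eqref{algo4}, the strong-convexity estimate \eqref{eq2-1} at $x^{k+1}=p(y^k,\mu_{k+1},\gamma_{k+1})$, and the convexity of $\tilde c(\cdot,\mu_{k+1})$ to reach $\tilde f(x^{k+1},\mu_{k+1})\le\tilde f(x,\mu_{k+1})+\tfrac{1}{2}(\gamma_{k+1}\mu_{k+1})^{-1}(\|x-y^k\|^2-\|x-x^{k+1}\|^2)$, and then finish with the same polarization identity. The only cosmetic difference is that you chain the three bounds through $Q$ while the paper adds its two intermediate inequalities \eqref{lem2-1} and \eqref{lem2-2}; the content is identical.
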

\begin{proof}
	Letting $y=y^k$, $\mu=\mu_{k+1}$ and $\gamma=\gamma_{k+1}$ in (\ref{eq2-1}), we have
	\[\begin{aligned}
	&Q(x,y^k,\mu_{k+1},\gamma_{k+1})\\
	\geq&
	Q(p(y^k,\mu_{k+1},\gamma_{k+1}),y^k,\mu_{k+1},\gamma_{k+1})
	+\frac{1}{2}(\gamma_{k+1}\mu_{k+1})^{-1}\|x-p(y^k,\mu_{k+1},\gamma_{k+1})\|^2\\
	=&Q(x^{k+1},y^k,\mu_{k+1},\gamma_{k+1})
	+\frac{1}{2}(\gamma_{k+1}\mu_{k+1})^{-1}\|x-x^{k+1}\|^2,\quad\forall x\in\mathcal{X}.
	\end{aligned}\]
	Upon rearranging the terms, we deduce that, for any $x\in\mathcal{X}$,
	\begin{equation}\label{lem2-1}
	\begin{aligned}
	g(x^{k+1})\leq& g(x)+\langle\nabla\tilde{c}(y^k,\mu_{k+1}),x-x^{k+1}\rangle+
	\frac{1}{2}(\gamma_{k+1}\mu_{k+1})^{-1}\|x-y^{k}\|^2\\
	&-\frac{1}{2}(\gamma_{k+1}\mu_{k+1})^{-1}\|x-x^{k+1}\|^2
	-\frac{1}{2}(\gamma_{k+1}\mu_{k+1})^{-1}\|x^{k+1}-y^{k}\|^2.
	\end{aligned}\end{equation}
	Recalling (\ref{algo4}) with $\gamma_{k+1}=\hat{\gamma}_{k+1}$ and $x^{k+1}=\hat{x}^{k+1}$, we find
	\begin{equation}\label{lem2-2}
	\begin{aligned}
		&\tilde{c}(x^{k+1},\mu_{k+1}) \\
		\leq &\tilde{c}(y^{k},\mu_{k+1})+
	\langle\nabla\tilde{c}(y^{k},\mu_{k+1}),x^{k+1}-y^k\rangle+\frac{1}{2}(\gamma_{k+1}\mu_{k+1})^{-1}\|x^{k+1}-y^k\|^2.
	\end{aligned}
	\end{equation}
	Adding (\ref{lem2-1}) and (\ref{lem2-2}) together, we deduce that, for any $x\in\mathcal{X}$,
	\begin{equation}\label{lem2-4}
	\begin{aligned}
	&\tilde{f}(x^{k+1},\mu_{k+1})=\tilde{c}(x^{k+1},\mu_{k+1})+g(x^{k+1})\\
	\leq&\tilde{c}(y^{k},\mu_{k+1})+
	\langle\nabla\tilde{c}(y^{k},\mu_{k+1}),x-y^k\rangle
	+\frac{1}{2}(\gamma_{k+1}\mu_{k+1})^{-1}\|x-y^{k}\|^2\\
	&-\frac{1}{2}(\gamma_{k+1}\mu_{k+1})^{-1}\|x-x^{k+1}\|^2+g(x)\\
	\leq&\tilde{c}(x,\mu_{k+1})+g(x)+\frac{1}{2}(\gamma_{k+1}\mu_{k+1})^{-1}\|x-y^{k}\|^2
	-\frac{1}{2}(\gamma_{k+1}\mu_{k+1})^{-1}\|x-x^{k+1}\|^2 \\
	=&\tilde{f}(x,\mu_{k+1})+\frac{1}{2}(\gamma_{k+1}\mu_{k+1})^{-1}\|x-y^{k}\|^2
	-\frac{1}{2}(\gamma_{k+1}\mu_{k+1})^{-1}\|x-x^{k+1}\|^2,
	\end{aligned}
	\end{equation}
	where the second inequality follows from the convexity of $\tilde{c}(\cdot,\mu_{k+1})$ assumed in Definition \ref{defn1}-(iv).
	Thus, the rearranging terms of (\ref{lem2-4}) gives (\ref{lem2-3}).
\end{proof}

Fix $x^*\in\arg\min_{x\in\mathcal{X}}f$. Let us bring forward the global energy function that will serve for the Lyapunov analysis:
\begin{equation}\label{prop1-10}
\begin{aligned}
\mathcal{E}_k:=&\frac{2\gamma_{k}\mu_{k}}{\alpha-1}(k+\alpha-2)^2W_k
+(\alpha-1)\|u^{k}-x^*\|^2\\
&+\left(\frac{4\kappa{\gamma}_0\mu_0}{2\sigma-1}\right)\mu_k(k+\alpha-2)\ln^{1-\sigma}(k+\alpha-2),
\end{aligned}\end{equation}
where
\begin{equation}\label{prop1-11}
W_k:=\tilde{f}(x^{k},\mu_{k})+\kappa\mu_{k}-f(x^*)\quad\mbox{and}\quad u^k:=\left(\frac{k+\alpha-2}{\alpha-1}\right)x^{k}-\left(\frac{k-1}{\alpha-1}\right)x^{k-1}.
\end{equation}
The following proposition gives a key estimation for the forthcoming analysis. It provides the most important thing that
$\{\mathcal{E}_k\}$ is non-increasing for all $k$.
\begin{proposition}\label{prop1}
	Let $\mathcal{E}_k$ be the sequence defined in (\ref{prop1-10}). Then, for any $k\geq1$, we have
	\begin{equation}\label{prop1-9}
	\mathcal{E}_{k+1}+\frac{2(\alpha-3)\gamma_{k+1}\mu_{k+1}}{\alpha-1}(k+\alpha-1)W_k\leq\mathcal{E}_{k}.
	\end{equation}
	Moreover,
	\begin{itemize}
		\item[{\rm(i)}]
		the sequence $\{\mathcal{E}_k\}$ is non-increasing for all $k\geq1$, and $\lim_{k\rightarrow\infty}\mathcal{E}_k$ exists;
		\item[{\rm(ii)}] for every $k\geq1$, $$\mathcal{E}_k\leq(\alpha-1)\|x^*-x^0\|^2+4(\alpha-1)\kappa\gamma_{0}\mu_{0}^2
		+\frac{4\kappa{\gamma}_0\mu_0^2}{2\sigma-1}(\alpha-1)\ln^{1-\sigma}(\alpha-1).$$
	\end{itemize}
\end{proposition}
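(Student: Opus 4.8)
The plan is to carry out the Lyapunov (energy) analysis of Attouch and Peypouquet for FISTA with $\alpha>3$, but applied at each iteration to the \emph{smoothed} objective $\tilde f(\cdot,\mu_{k+1})$, and to cover the cost of changing the smoothing parameter $\mu_k\to\mu_{k+1}$ by a telescoping of the third summand of $\mathcal E_k$. Throughout I use $W_k\ge0$, which holds because, by (\ref{eq-s2}) and Lemma \ref{lemma1}(iii), $\tilde f(x^k,\mu_k)+\kappa\mu_k\ge f(x^k)\ge f(x^*)$.

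\emph{Step 1: a one-step inequality for $\tilde f(\cdot,\mu_{k+1})$.} Apply Lemma \ref{lemma2} twice at $\mu=\mu_{k+1}$, $\gamma=\gamma_{k+1}$ — once with $x=x^k$, once with $x=x^*$ — and take the convex combination with weights $1-\theta_k$ and $\theta_k$, where $\theta_k:=\frac{\alpha-1}{k+\alpha-1}$. From (\ref{algo1}) and the definition of $u^k$ in (\ref{prop1-11}) one checks the two elementary identities $y^k=(1-\theta_k)x^k+\theta_k u^k$ and $u^{k+1}-u^k=\frac{k+\alpha-1}{\alpha-1}(x^{k+1}-y^k)$. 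With these, the mixed inner product produced by Lemma \ref{lemma2} equals $\theta_k^2\langle u^k-u^{k+1},u^k-x^*\rangle$, and expanding it by $2\langle a-b,a-c\rangle=\|a-b\|^2+\|a-c\|^2-\|b-c\|^2$ makes $\|u^{k+1}-u^k\|^2$ cancel the term $-\tfrac12(\gamma_{k+1}\mu_{k+1})^{-1}\|x^{k+1}-y^k\|^2$. The outcome is a clean relation linking $\tilde f(x^{k+1},\mu_{k+1})-\tilde f(x^*,\mu_{k+1})$, $\tilde f(x^k,\mu_{k+1})-\tilde f(x^*,\mu_{k+1})$, and $\|u^k-x^*\|^2-\|u^{k+1}-x^*\|^2$.

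\emph{Step 2: reinstating $\mu_k$ and $W_k$.} Next I use (\ref{eq-s1}) to pass from $\tilde f(x^k,\mu_{k+1})$ to $\tilde f(x^k,\mu_k)$ (at a cost $\kappa(\mu_k-\mu_{k+1})$) and (\ref{eq-s2}) to pass from $\tilde f(x^*,\mu_{k+1})$ to $f(x^*)$, keeping the error $f(x^*)-\tilde f(x^*,\mu_{k+1})$ as the \emph{same} quantity on both sides of the inequality so that it collapses into one term with coefficient $\theta_k$ (this is essential: a cruder bookkeeping would leave a residual too large to telescope). After substituting $W_k$, this gives $W_{k+1}\le(1-\theta_k)W_k+2\kappa\theta_k\mu_{k+1}+\tfrac{(\alpha-1)^2}{2(k+\alpha-1)^2}(\gamma_{k+1}\mu_{k+1})^{-1}\big(\|u^k-x^*\|^2-\|u^{k+1}-x^*\|^2\big)$. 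Multiply through by $\frac{2\gamma_{k+1}\mu_{k+1}}{\alpha-1}(k+\alpha-1)^2$. Using $W_k\ge0$ one may replace $(1-\theta_k)(k+\alpha-1)^2=(k+\alpha-2)^2-(\alpha-3)(k+\alpha-1)-1$ by the larger $(k+\alpha-2)^2-(\alpha-3)(k+\alpha-1)$, and using $\gamma_{k+1}\mu_{k+1}\le\gamma_k\mu_k$ (Lemma \ref{lemma1}(i),(ii)) one may replace the coefficient of $W_k$ by $\frac{2\gamma_k\mu_k}{\alpha-1}(k+\alpha-2)^2$. What is left on the right, over and above the first two summands of $\mathcal E_k$, is precisely the residual $4\kappa\gamma_{k+1}\mu_{k+1}^2(k+\alpha-1)$.

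\emph{Step 3: telescoping the residual — the only delicate estimate — and conclusions.} Plugging (\ref{algo2}) into the third summand of $\mathcal E_k$ shows it equals $T_k:=\frac{4\kappa\gamma_0\mu_0^2}{(2\sigma-1)\ln^{2\sigma-1}(k+\alpha-2)}$, while $4\kappa\gamma_{k+1}\mu_{k+1}^2(k+\alpha-1)\le\frac{4\kappa\gamma_0\mu_0^2}{(k+\alpha-1)\ln^{2\sigma}(k+\alpha-1)}$. The mean value theorem applied to $s\mapsto\ln^{1-2\sigma}s$ on $[k+\alpha-2,k+\alpha-1]$ (derivative $-(2\sigma-1)s^{-1}\ln^{-2\sigma}s$), together with monotonicity of $s\mapsto s\ln^{2\sigma}s$, then gives $T_k-T_{k+1}\ge\frac{4\kappa\gamma_0\mu_0^2}{(k+\alpha-1)\ln^{2\sigma}(k+\alpha-1)}\ge 4\kappa\gamma_{k+1}\mu_{k+1}^2(k+\alpha-1)$ — this is exactly where $\sigma>\tfrac12$ is used, and I expect it to be the only genuinely subtle point; everything else is bookkeeping. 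Adding $T_{k+1}$ to both sides of the inequality of Step 2 and invoking this estimate yields (\ref{prop1-9}). Since $\alpha>3$, $\gamma_{k+1},\mu_{k+1}>0$ and $W_k\ge0$, (\ref{prop1-9}) gives $\mathcal E_{k+1}\le\mathcal E_k$; each of the three summands of $\mathcal E_k$ is nonnegative (for the last one, because $\sigma>\tfrac12$ and $k+\alpha-2>1$), so $\{\mathcal E_k\}$ is non-increasing and bounded below, hence convergent, which is (i). For (ii), by monotonicity it suffices to bound $\mathcal E_1$: since $x^{-1}=x^0$ forces $y^0=x^0$ and $u^1=x^1$, Lemma \ref{lemma2} at $k=0$, $x=x^*$, together with the three-point identity and (\ref{eq-s2}), yields $\|x^1-x^*\|^2\le\|x^0-x^*\|^2-2\gamma_1\mu_1W_1+4\kappa\gamma_1\mu_1^2$; substituting this into $\mathcal E_1$ the $W_1$ terms cancel, and $\gamma_1\le\gamma_0$, $\mu_1\le\mu_0$ give the claimed estimate.
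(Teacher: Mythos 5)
Your proposal is correct and follows essentially the same route as the paper's proof: the same convex combination of the two instances of Lemma \ref{lemma2} with weights $\tfrac{k}{k+\alpha-1}$ and $\tfrac{\alpha-1}{k+\alpha-1}$, the same identity converting the mixed inner product into a difference of $\|u^k-x^*\|^2$ terms, the same absorption of the residual $4\kappa\gamma_{k+1}\mu_{k+1}^2(k+\alpha-1)$ by telescoping the third summand of $\mathcal{E}_k$ (you via the mean value theorem for $s\mapsto\ln^{1-2\sigma}s$, the paper via an equivalent tangent-line inequality), and the same bound on $\mathcal{E}_1$ using $u^1=x^1$ and $y^0=x^0$. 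No gaps.
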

\begin{proof}
	(i). Let us write inequality (\ref{lem2-3}) at $x=x^k$ and $x=x^*$, respectively, i.e.
	\begin{equation}\label{prop2-7}
	\begin{aligned}
	\tilde{f}(x^{k+1},\mu_{k+1})\leq&\tilde{f}(x^k,\mu_{k+1})
	+(\gamma_{k+1}\mu_{k+1})^{-1}\langle y^k-x^{k+1},y^k-x^k\rangle\\
	&-\frac{1}{2}(\gamma_{k+1}\mu_{k+1})^{-1}\|x^{k+1}-y^{k}\|^2
	\end{aligned}
	\end{equation}
	and
	\begin{equation}\label{prop2-8}
	\begin{aligned}
	\tilde{f}(x^{k+1},\mu_{k+1})\leq&\tilde{f}(x^*,\mu_{k+1})
	+(\gamma_{k+1}\mu_{k+1})^{-1}\langle y^k-x^{k+1},y^k-x^*\rangle\\
	&-\frac{1}{2}(\gamma_{k+1}\mu_{k+1})^{-1}\|x^{k+1}-y^{k}\|^2.
	\end{aligned}
	\end{equation}
	Substituting the algebraic inequalities (\ref{eq-s1}) and (\ref{eq-s2}) into (\ref{prop2-7}) and  (\ref{prop2-8}), respectively,
	we have the following two inequalities
	\begin{equation}\label{prop1-1}
	\begin{aligned}
	W_{k+1}\leq &W_{k}
	+(\gamma_{k+1}\mu_{k+1})^{-1}\langle y^k-x^{k+1},y^k-x^k\rangle\\
	&-\frac{1}{2}(\gamma_{k+1}\mu_{k+1})^{-1}\|x^{k+1}-y^{k}\|^2
	\end{aligned}\end{equation}
	and
	\begin{equation}\label{prop1-2}
	\begin{aligned}
	W_{k+1}\leq&(\gamma_{k+1}\mu_{k+1})^{-1}\langle y^k-x^{k+1},y^k-x^*\rangle\\
	&-\frac{1}{2}(\gamma_{k+1}\mu_{k+1})^{-1}\|x^{k+1}-y^{k}\|^2+2\kappa\mu_{k+1}.
	\end{aligned}\end{equation}
	
	Multiplying (\ref{prop1-1}) by $\frac{k}{k+\alpha-1}$, and (\ref{prop1-2}) by $\frac{\alpha-1}{k+\alpha-1}$, then adding them together, we obtain that
	\begin{equation}\label{prop1-3}
	\begin{aligned}
	W_{k+1}
	\leq & \frac{k}{k+\alpha-1}W_k
	+\frac{2\kappa(\alpha-1)}{k+\alpha-1}\mu_{k+1}
	-\frac{1}{2}(\gamma_{k+1}\mu_{k+1})^{-1}\|x^{k+1}-y^{k}\|^2\\
	&+
	(\gamma_{k+1}\mu_{k+1})^{-1}\left\langle y^k-x^{k+1},\frac{k}{k+\alpha-1}(y^k-x^k)+
	\frac{\alpha-1}{k+\alpha-1}(y^k-x^*)
	\right\rangle.
	\end{aligned}\end{equation}
	By using the algebraic inequality
	\begin{equation}\label{prop1-14}
	-\|a-b\|^2+2\langle b-a,b-c\rangle=-\|a-c\|^2+\|b-c\|^2
	\end{equation}
	with $a=x^{k+1}$, $b=y^k$ and $c=\frac{k}{k+\alpha-1}x^k+\frac{\alpha-1}{k+\alpha-1}x^*$, we observe that
	\begin{equation}\label{prop1-4}
	\begin{aligned}
	&-\|x^{k+1}-y^k\|^2+2\left\langle y^k-x^{k+1},y^k-\frac{k}{k+\alpha-1}x^k-\frac{\alpha-1}{k+\alpha-1}x^*\right\rangle\\
	=&-\left\|x^{k+1}-\frac{k}{k+\alpha-1}x^k-\frac{\alpha-1}{k+\alpha-1}x^*\right\|^2+
	\left\|y^k-\frac{k}{k+\alpha-1}x^k-\frac{\alpha-1}{k+\alpha-1}x^*\right\|^2\\
	%=&-\|x^{k+1}-\frac{k}{k+\alpha-1}x^k+\frac{\alpha-1}{k+\alpha-1}x^*\|^2+
	%\|\frac{k+\alpha-2}{k+\alpha-1}x^k-\frac{k-1}{k+\alpha-1}x^{k-1}-\frac{\alpha-1}{k+\alpha-1}x^*\|^2\\
	%=&-\left(\frac{\alpha-1}{k+\alpha-1}\right)^2\left(\left\|\frac{k+\alpha-1}{\alpha-1}x^{k+1}-\frac{k}{\alpha-1}x^k-x^*\right\|^2
	%-\left\|\frac{k+\alpha-2}{\alpha-1}x^{k}-\frac{k-1}{\alpha-1}x^{k-1}-x^*\right\|^2
	%\right)\\
	=&-\left(\frac{\alpha-1}{k+\alpha-1}\right)^2\left(\left\|u^{k+1}-x^*\right\|^2
	-\left\|u^k-x^*\right\|^2\right),
	\end{aligned}
	\end{equation}
	where the last equality uses the expression of $y^k$ in (\ref{algo1}).
	Substituting (\ref{prop1-4}) into (\ref{prop1-3}) and by simple algebraic manipulations, we have
	\[\begin{aligned}
	W_{k+1}\leq&\frac{k}{k+\alpha-1}W_k+\frac{2\kappa(\alpha-1)}{k+\alpha-1}\mu_{k+1}\\
	&-\frac{1}{2}(\gamma_{k+1}\mu_{k+1})^{-1}\left(\frac{\alpha-1}{k+\alpha-1}\right)^2\left(\|u^{k+1}-x^*\|^2
	-\|u^k-x^*\|^2\right).
	\end{aligned}
	\]
	Multiplying the above inequality by $\frac{2\gamma_{k+1}\mu_{k+1}}{\alpha-1}(k+\alpha-1)^2$, we obtain
	\begin{equation}\label{prop1-5}
	\begin{aligned}
	\frac{2\gamma_{k+1}\mu_{k+1}}{\alpha-1}(k+\alpha-1)^2W_{k+1}
	\leq&\frac{2\gamma_{k+1}\mu_{k+1}}{\alpha-1}k(k+\alpha-1)W_k
	+4\kappa({k+\alpha-1})\gamma_{k+1}\mu_{k+1}^2\\
	&-(\alpha-1)\left(\|u^{k+1}-x^*\|^2
	-\|u^k-x^*\|^2\right).
	\end{aligned}\end{equation}
	Since $\alpha>3$, we infer that
	\begin{equation}\label{prop1-6}
	\begin{aligned}
	k(k+\alpha-1)=&(k+\alpha-2)^2-(\alpha-3)(k+\alpha-1)-1 \\
	<&(k+\alpha-2)^2-(\alpha-3)(k+\alpha-1).
	\end{aligned}
	\end{equation}
	Then, reformulating (\ref{prop1-5}) by (\ref{prop1-6}) yields
	\begin{equation}\label{prop1-7}
	\begin{aligned}
	&\frac{2\gamma_{k+1}\mu_{k+1}}{\alpha-1}(k+\alpha-1)^2W_{k+1}+\frac{2(\alpha-3)\gamma_{k+1}\mu_{k+1}}{\alpha-1}(k+\alpha-1)W_{k}\\
	&\quad 	+(\alpha-1)\|u^{k+1}-x^*\|^2 \\
	\leq&\frac{2\gamma_{k+1}\mu_{k+1}}{\alpha-1}(k+\alpha-2)^2W_k
	+(\alpha-1)\|u^{k}-x^*\|^2
	+4\kappa({k+\alpha-1})\gamma_{k+1}\mu_{k+1}^2\\
	\leq&\frac{2\gamma_{k}\mu_{k}}{\alpha-1}(k+\alpha-2)^2W_k
	+(\alpha-1)\|u^{k}-x^*\|^2
	+4\kappa{\gamma}_0({k+\alpha-1})\mu_{k+1}^2,
	\end{aligned}
	\end{equation}
	where the last inequality uses the non-increasing property of $\{\gamma_k\}$ and $\{\mu_k\}$.
	To write (\ref{prop1-7}) in a recursive form, by the definition of $\mu_{k}$ given in (\ref{algo2}), we observe that
	\begin{equation}\label{prop1-8}
	\begin{aligned}
	&\mu_0\mu_k(k+\alpha-2)\ln^{1-\sigma}(k+\alpha-2)-\mu_0\mu_{k+1}(k+\alpha-1)\ln^{1-\sigma}(k+\alpha-1)\\
	%&\mu_0\frac{(k+\alpha-2)\mu_k}{\ln^{\sigma-1}(k+\alpha-2)}-\mu_0\frac{(k+\alpha-1)\mu_{k+1}}{\ln^{\sigma-1}(k+\alpha-1)}\\
	=&\mu_0^2
	\left(\ln^{1-2\sigma}(k+\alpha-2)-\ln^{1-2\sigma}(k+\alpha-1)\right)\\
	%=&\frac{\mu_0^2(k+\alpha-1)\ln(k+\alpha-1)}{(k+\alpha-1)\ln^{2\sigma}(k+\alpha-1)}
	%\left(\frac{\ln^{2\sigma-1}(k+\alpha-1)}{\ln^{2\sigma-1}(k+\alpha-2)}-1\right)\\
	=&\frac{\mu_0^2}{\ln^{2\sigma-1}(k+\alpha-1)}
	\left(\frac{\ln^{2\sigma-1}(k+\alpha-1)}{\ln^{2\sigma-1}(k+\alpha-2)}-1\right)
	\\
	\geq&(2\sigma-1)(k+\alpha-1)\mu_{k+1}^2\frac{\ln^{2\sigma-1}(k+\alpha-1)}{\ln^{2\sigma-1}(k+\alpha-2)}\\
	\geq&(2\sigma-1)(k+\alpha-1)\mu_{k+1}^2,
	\end{aligned}
	\end{equation}
	where the third inequality follows from
	$$\ln^{2\sigma-1}(k+\alpha-1)\geq\ln^{2\sigma-1}(k+\alpha-2)
	+(2\sigma-1)\frac{\ln^{2\sigma-2}(k+\alpha-1)}{(k+\alpha-1)},\quad \forall\sigma\in\left(\frac{1}{2}, 1\right] .$$
	
	Combining (\ref{prop1-7}) with (\ref{prop1-8}), we finish the proof for the estimation in (\ref{prop1-9}).
	Thus, $\{\mathcal{E}_k\}$ is non-increasing for $k\geq1$, i.e. $\mathcal{E}_k\leq\mathcal{E}_1$.
	
	By (\ref{eq-s2}) and $x^k\in\mathcal{X}$, we get
	\begin{equation}\label{prop1-15}
	W_k=\tilde{f}(x^{k},\mu_{k})+\kappa\mu_{k}-f(x^*)\geq f(x^k)-f(x^*)\geq0,
	\end{equation}
	then $\mathcal{E}_k$ is lower bounded by $0$, which implies the existence of
	$\lim_{k\rightarrow\infty}\mathcal{E}_k$.
	
	(ii). According to the non-increasing of sequence $\{\mathcal{E}_k\}$, then, all that remains is to estimate the upper bound of
	\begin{equation}\label{prop1-13}
	\mathcal{E}_1=2\gamma_{1}\mu_{1}(\alpha-1)W_1
	+(\alpha-1)\|u^{1}-x^*\|^2
	+\frac{4\kappa{\gamma}_0\mu_0}{2\sigma-1}\mu_1(\alpha-1)\ln^{1-\sigma}(\alpha-1).
	\end{equation}
	Returning to (\ref{prop1-2}) with $k=0$, and using \eqref{prop1-14},
	%following the similar deduction of (\ref{prop1-4}),
	we deduce that
	\[\begin{aligned}
	W_1\leq &(\gamma_{1}\mu_{1})^{-1}\langle y^0-x^{1},y^0-x^*\rangle
	-\frac{1}{2}(\gamma_{1}\mu_{1})^{-1}\|x^{1}-y^{0}\|^2+2\kappa\mu_{1}\\
	= & \frac{1}{2}(\gamma_{1}\mu_{1})^{-1}\|x^*-y^{0}\|^2
	-\frac{1}{2}(\gamma_{1}\mu_{1})^{-1}\|x^*-x^{1}\|^2+2\kappa\mu_{1},
	\end{aligned}
	\]
	which implies
	\begin{equation}\label{prop1-12}
	2\gamma_{1}\mu_{1}W_1
	+\|x^*-x^{1}\|^2
	\leq\|x^*-y^{0}\|^2+4\kappa\gamma_{1}\mu_{1}^2.
	\end{equation}
	Recalling the definition of $u^1$ in (\ref{prop1-11}), we see that
	$u^1=x^{1}$, combining which with (\ref{prop1-13}), (\ref{prop1-12}), $y^0=x^0$, $\mu_1<\mu_0$ and $\gamma_1\leq\gamma_0$, we find that
	\begin{equation}\label{prop1-16}
	\mathcal{E}_1\leq(\alpha-1)\|x^*-x^0\|^2+4(\alpha-1)\kappa\gamma_{0}\mu_{0}^2+\frac{4\kappa{\gamma}_0\mu_0^2}{2\sigma-1}(\alpha-1)\ln^{1-\sigma}(\alpha-1).
	\end{equation}
	Hence, we establish the evaluation in item (ii).
\end{proof}

As a result of Proposition \ref{prop1}, we obtain some important properties of $W_k$ as shown below, where we need introduce an important lemma on sequence convergence.
\begin{lemma}\cite{Attouch-MP}\label{add2}
	Let $\{a_k\}$ be a sequence of nonnegative numbers, and satisfy
	\begin{equation*}
		\sum_{k=1}^{\infty} (a_{k+1}-a_k)_{+}<\infty.
	\end{equation*}
	Then, $\lim_{k\to\infty}a_k$ exists.
\end{lemma}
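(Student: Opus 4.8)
The statement is a standard \emph{almost-monotone sequences converge} lemma, and the plan is to reduce it to the monotone convergence theorem by subtracting off the accumulated positive increments. First I would introduce the partial sums $s_k := \sum_{j=1}^{k-1}(a_{j+1}-a_j)_{+}$ (with $s_1 := 0$); by the summability hypothesis $\{s_k\}$ is nondecreasing and bounded above, hence converges to a finite limit $s_\infty$. This is the only point at which the assumption $\sum_{k=1}^{\infty}(a_{k+1}-a_k)_{+}<\infty$ enters.

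Second, I would set $b_k := a_k - s_k$ and check that $\{b_k\}$ is non-increasing: using the decomposition $t = t_{+} - t_{-}$ with $t_{+},t_{-}\ge 0$, one has
\[
b_{k+1}-b_k = (a_{k+1}-a_k) - (a_{k+1}-a_k)_{+} = -(a_{k+1}-a_k)_{-} \le 0 .
\]
At the same time $\{b_k\}$ is bounded below, since $a_k \ge 0$ and $s_k \le s_\infty$ give $b_k = a_k - s_k \ge -s_\infty$. A non-increasing sequence bounded from below converges, so $b_\infty := \lim_{k\to\infty} b_k$ exists and is finite.

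Finally, writing $a_k = b_k + s_k$ and letting $k\to\infty$, the right-hand side tends to $b_\infty + s_\infty$, so $\lim_{k\to\infty} a_k$ exists (and equals $b_\infty + s_\infty$). There is no genuinely hard step here: the entire content is the choice of the corrector $s_k$ so that $a_k - s_k$ becomes monotone, together with the observation that nonnegativity of $a_k$ and the finite limit of $\{s_k\}$ supply the lower bound needed for the monotone convergence theorem. An equivalent route is to verify the Cauchy property directly: for $m>n$ one has $a_m - a_n \le \sum_{j=n}^{m-1}(a_{j+1}-a_j)_{+} \le \sum_{j=n}^{\infty}(a_{j+1}-a_j)_{+}$, and the right-hand side tends to $0$ as $n\to\infty$ as the tail of a convergent series, which (with $a_k\ge 0$ giving boundedness) again yields convergence.
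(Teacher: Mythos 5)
Your proof is correct. The paper does not actually prove this lemma---it is quoted from \cite{Attouch-MP}---and your main argument (subtracting the partial sums $s_k$ of the positive parts so that $a_k-s_k$ becomes non-increasing and bounded below by $-s_\infty$, then invoking monotone convergence) is exactly the standard proof of this result in that reference. One small caveat on your alternative route: the one-sided estimate $a_m-a_n\le\sum_{j=n}^{\infty}(a_{j+1}-a_j)_{+}$ is not by itself the Cauchy property (it does not bound $a_n-a_m$), so to finish that way you should pass to $\limsup_m a_m\le a_n+T_n$ and then $\liminf_n$ to conclude $\limsup a_k\le\liminf a_k$; with that adjustment it also works.
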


\begin{proposition}\label{prop2}
	Let $\{W_k\}$ be the sequence defined in \eqref{prop1-11}, then
	\begin{itemize}
		\item[{\rm(i)}] $\sum_{k=1}^{\infty}\gamma_{k}\mu_{k}(k+\alpha-2)W_k<\infty$;
		\item[{\rm(ii)}] $\lim_{k\rightarrow\infty}\left[(k-1)^2\|x^{k}-x^{k-1}\|^2+2\gamma_{k}\mu_{k}(k+\alpha-2)^2W_{k}\right]$ exists;
		\item[{\rm(iii)}] $\sum_{k=1}^{\infty}(k-1)\|x^{k}-x^{k-1}\|^2<\infty$.
	\end{itemize}
\end{proposition}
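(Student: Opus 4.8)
All three assertions are to be harvested from the Lyapunov estimate \eqref{prop1-9} of Proposition~\ref{prop1}, together with the explicit form \eqref{algo2} of $\mu_k$ and the bounds on $\{\gamma_k\}$ from Lemma~\ref{lemma1}.

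\textbf{Part (i).} I would sum \eqref{prop1-9} over $k=1,\dots,N$. Telescoping $\mathcal{E}_{k+1}-\mathcal{E}_k$, and using that $\mathcal{E}_{N+1}\ge0$ (every term of $\mathcal{E}_k$ is nonnegative since $W_k\ge0$ by \eqref{prop1-15}) together with the uniform bound on $\mathcal{E}_1$ from Proposition~\ref{prop1}(ii), this gives
\[
\sum_{k=1}^{\infty}\gamma_{k+1}\mu_{k+1}(k+\alpha-1)W_k\le\frac{\alpha-1}{2(\alpha-3)}\,\mathcal{E}_1<\infty ,
\]
where $\alpha>3$ is used to keep the constant positive. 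It then remains to trade the weight $\gamma_{k+1}\mu_{k+1}(k+\alpha-1)$ for $\gamma_k\mu_k(k+\alpha-2)$. From \eqref{algo2} one has $\mu_k(k+\alpha-2)=\mu_0/\ln^{\sigma}(k+\alpha-2)$ and $\mu_{k+1}(k+\alpha-1)=\mu_0/\ln^{\sigma}(k+\alpha-1)$ for $k\ge1$, hence
\[
\gamma_k\mu_k(k+\alpha-2)\le\frac{\gamma_0}{\underline{\gamma}}\,C_0\,\gamma_{k+1}\mu_{k+1}(k+\alpha-1),\qquad C_0:=\sup_{k\ge1}\left(\frac{\ln(k+\alpha-1)}{\ln(k+\alpha-2)}\right)^{\sigma}<\infty ,
\]
the supremum being finite because the ratio tends to $1$ and $\ln(k+\alpha-2)\ge\ln(\alpha-1)>0$ for $k\ge1$ (as $\alpha>3$), and $\underline{\gamma}$ is the lower bound of Lemma~\ref{lemma1}(i). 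Summing now yields (i).

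\textbf{Parts (ii) and (iii).} The plan is to produce one ``almost-monotonicity'' inequality for $h_k:=(k-1)^2\|x^{k}-x^{k-1}\|^2+2\gamma_{k}\mu_{k}(k+\alpha-2)^2W_{k}$, which is nonnegative because $W_k\ge0$. I would start from inequality \eqref{prop1-1} obtained inside the proof of Proposition~\ref{prop1}, apply the polarization identity
\[
2\langle y^k-x^{k+1},y^k-x^k\rangle=\|y^k-x^{k+1}\|^2+\|y^k-x^k\|^2-\|x^k-x^{k+1}\|^2
\]
to cancel the $\|y^k-x^{k+1}\|^2$ terms, and substitute $y^k-x^k=\frac{k-1}{k+\alpha-1}(x^k-x^{k-1})$ from \eqref{algo1}. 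Multiplying the resulting estimate by $2\gamma_{k+1}\mu_{k+1}(k+\alpha-1)^2$ and adding $k^2\|x^{k+1}-x^k\|^2$ to both sides, and using $k^2-(k+\alpha-1)^2=-(\alpha-1)(2k+\alpha-1)$, gives
\[
h_{k+1}\le 2\gamma_{k+1}\mu_{k+1}(k+\alpha-1)^2W_k+(k-1)^2\|x^k-x^{k-1}\|^2-(\alpha-1)(2k+\alpha-1)\|x^{k+1}-x^k\|^2 .
\]
Finally, by \eqref{algo2}, $\gamma_{k+1}\le\gamma_k$ and monotonicity of $\ln$, for $k\ge1$,
\[
\gamma_{k+1}\mu_{k+1}(k+\alpha-1)^2\le\frac{k+\alpha-1}{k+\alpha-2}\,\gamma_{k}\mu_{k}(k+\alpha-2)^2=\gamma_{k}\mu_{k}(k+\alpha-2)^2+\gamma_{k}\mu_{k}(k+\alpha-2),
\]
so that, for every $k\ge1$,
\[
h_{k+1}+(\alpha-1)(2k+\alpha-1)\|x^{k+1}-x^k\|^2\le h_k+2\gamma_{k}\mu_{k}(k+\alpha-2)W_k .
\]
Summing this over $k=1,\dots,N$, using $h_{N+1}\ge0$ and the finiteness of $\sum_k\gamma_k\mu_k(k+\alpha-2)W_k$ from (i), bounds $\sum_k(2k+\alpha-1)\|x^{k+1}-x^k\|^2$ and hence $\sum_{k=1}^{\infty}(k-1)\|x^{k}-x^{k-1}\|^2<\infty$, which is (iii). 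Dropping the nonnegative term in the last display shows $(h_{k+1}-h_k)_+\le 2\gamma_{k}\mu_{k}(k+\alpha-2)W_k$, and the sum of the right-hand side over $k$ is finite by (i); since $h_k\ge0$, Lemma~\ref{add2} gives the existence of $\lim_k h_k$, which is exactly (ii).

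\textbf{Expected obstacle.} The routine parts are the polarization identity and the telescoping. The genuinely delicate point is choosing the right energy $h_k$ and checking that the ``drift'' of the coefficient $\gamma_{k+1}\mu_{k+1}(k+\alpha-1)^2$ relative to $\gamma_{k}\mu_{k}(k+\alpha-2)^2$ is controlled precisely by the quantity made summable in (i); this is exactly where the extra $\ln^{\sigma}$ factor in the update rule \eqref{algo2}, together with $\alpha>3$, is needed, and getting these two ingredients to dovetail is the heart of the argument.
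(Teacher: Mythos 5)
Your proposal is correct and follows essentially the same route as the paper: part (i) is the identical telescoping of \eqref{prop1-9} plus the weight comparison \eqref{pro_add}, and parts (ii)--(iii) use the same polarization identity applied to \eqref{prop1-1} and the same Lyapunov quantity $\alpha_k$ (your $h_k$). The only difference is cosmetic bookkeeping: you derive one unified inequality $h_{k+1}+(\alpha-1)(2k+\alpha-1)\|x^{k+1}-x^k\|^2\le h_k+2\gamma_k\mu_k(k+\alpha-2)W_k$ via the exact identity $(k+\alpha-1)^2-k^2=(\alpha-1)(2k+\alpha-1)$, where the paper splits the argument into \eqref{eq5} for (ii) and the cruder bound $(k+\alpha-1)^2\ge k^2+4k$ for (iii).
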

\begin{proof}
	(i). By summing up inequality (\ref{prop1-9}) from $k=1$ to $K$, we see that
	\[
	\mathcal{E}_{K+1}+\frac{2(\alpha-3)}{\alpha-1}\sum_{k=1}^K\gamma_{k+1}\mu_{k+1}(k+\alpha-1)
	W_k\leq\mathcal{E}_{1}.
	\]
	After letting $K$ tend to infinity in the above inequality and using (\ref{prop1-16}), since $\alpha>3$ and $\mathcal{E}_k\geq0$ for all $k\geq0$, we infer that
	\begin{equation}\label{prop2-3}
	\begin{aligned}
	\sum_{k=1}^{\infty}\gamma_{k+1}\mu_{k+1}(k+\alpha-1)W_k
	\leq\frac{(\alpha-1)\mathcal{E}_1}{2(\alpha-3)}<\infty.
	\end{aligned}
	\end{equation}
	Since, for all $k\geq 1$, it holds that
	\begin{equation}\label{pro_add}
	\begin{aligned}
	\gamma_{k+1}\mu_{k+1}(k+\alpha-1)=\gamma_{k+1}\frac{\mu_0}{\ln^{\sigma}(k+\alpha-1)}
	\geq&\underline{\gamma}\mu_k\frac{(k+\alpha-2)\ln^{\sigma}(k+\alpha-2)}{\ln^{\sigma}(k+\alpha-1)}\\
	\geq&\frac{\underline{\gamma}\ln^{\sigma}(\alpha-1)}{\gamma_0\ln^{\sigma}\alpha}\gamma_k\mu_k(k+\alpha-2)
	,%\,\,\forall k\geq1,
	\end{aligned}
	\end{equation}
	which uses $\underline{\gamma}\leq\gamma_k\leq\gamma_0$ proved in Lemma \ref{lemma1} and the increasing of $\frac{\ln^{\sigma}(k+\alpha-2)}{\ln^{\sigma}(k+\alpha-1)}$ for all $k\geq1$, then (\ref{prop2-3}) implies
	\[\sum_{k=1}^{\infty}\gamma_{k}\mu_{k}(k+\alpha-2)W_k<\infty.
	\]
	
	(ii). Returning to (\ref{prop1-1}) and using (\ref{prop1-14}) with $a=x^{k+1}$, $b=y^k$ and $c=x^k$, we deduce that
	\[
	W_{k+1}\leq W_{k}
	-\frac{1}{2}(\gamma_{k+1}\mu_{k+1})^{-1}\|x^{k+1}-x^{k}\|^2+
	\frac{1}{2}(\gamma_{k+1}\mu_{k+1})^{-1}\|y^{k}-x^{k}\|^2.
	\]
	In view of the definition of $y^k$ in (\ref{algo1}), we have from multiplying $2\gamma_{k+1}\mu_{k+1}(k+\alpha-1)^2$ on  the above inequality that
	\begin{equation}\label{prop2-4}
	\begin{aligned}
	&2\gamma_{k+1}\mu_{k+1}(k+\alpha-1)^2W_{k+1}+(k+\alpha-1)^2\|x^{k+1}-x^{k}\|^2\\
	\leq&2\gamma_{k+1}\mu_{k+1}(k+\alpha-1)^2W_{k}+(k-1)^2\|x^k-x^{k-1}\|^2.
	\end{aligned}
	\end{equation}
	By $k+\alpha-1\geq k$ and upon rearranging terms of the above inequality, it suffices to observe that
	\begin{equation}\label{prop2-1}
	\begin{aligned}
0\geq &k^2\|x^{k+1}-x^{k}\|^2 -(k-1)^2\|x^{k}-x^{k-1}\|^2\\
&+2\gamma_{k+1}\mu_{k+1}(k+\alpha-1)^2(W_{k+1}-W_{k}).
\end{aligned}
	\end{equation}
	Notice that
	\[
	\begin{aligned}
	&\mu_{k+1}(k+\alpha-1)^2-\mu_{k}(k+\alpha-2)^2\\
	=&\mu_{k+1}(k+\alpha-1)\left(k+\alpha-1-\frac{(k+\alpha-2)\ln^{\sigma}(k+\alpha-1)}{\ln^{\sigma}(k+\alpha-2)}\right)\\
	\leq&\mu_{k+1}(k+\alpha-1),
	\end{aligned}
	\]
	then
	\begin{equation}\label{prop2-2}
	\begin{aligned}
	&\mu_{k+1}(k+\alpha-1)^2W_{k+1}-\mu_{k}(k+\alpha-2)^2W_{k}\\
	=&\mu_{k+1}(k+\alpha-1)^2\left(W_{k+1}-W_{k}\right)
	+\left(\mu_{k+1}(k+\alpha-1)^2-\mu_{k}(k+\alpha-2)^2
	\right)W_{k}\\
	\leq&\mu_{k+1}(k+\alpha-1)^2\left(W_{k+1}-W_{k}\right)+\mu_{k+1}(k+\alpha-1)W_{k}.
	\end{aligned}
	\end{equation}
	
	For simplicity of notation, denote
	\begin{equation}\label{eq6}
	\alpha_k:=(k-1)^2\|x^{k}-x^{k-1}\|^2+2\gamma_{k}\mu_{k}(k+\alpha-2)^2W_{k}.
	\end{equation}
	Substituting (\ref{prop2-2}) into (\ref{prop2-1}), by $\gamma_{k+1}\leq\gamma_{k}$, and upon rearranging the terms, we deduce that
	\begin{equation}\label{eq5}
	\alpha_{k+1}-\alpha_k\leq2\gamma_{k+1}\mu_{k+1}(k+\alpha-1)W_{k}.
	\end{equation}
	Taking the positive part of the left-hand and thanks to (\ref{prop2-3}), we find
	$$\sum_{k=1}^{\infty}(\alpha_{k+1}-\alpha_k)_+<\infty.$$
	Since $\alpha_k\geq0$, by Lemma \ref{add2}, we infer that
	$$\lim_{k\rightarrow\infty}\alpha_k\quad \mbox{exists}.$$
	
	(iii). In view of $\alpha>3$, we observe that
	\[\begin{aligned}
	&(k+\alpha-1)^2\|x^{k+1}-x^{k}\|^2-(k-1)^2\|x^k-x^{k-1}\|^2\\
	\geq&(k+2)^2\|x^{k+1}-x^{k}\|^2-(k-1)^2\|x^k-x^{k-1}\|^2\\
	\geq&k^2\|x^{k+1}-x^{k}\|^2-(k-1)^2\|x^k-x^{k-1}\|^2+4k\|x^{k+1}-x^{k}\|^2,
	\end{aligned}\]
	combining which with (\ref{eq5}), then we obtain
	\[\begin{aligned}
	&k^2\|x^{k+1}-x^{k}\|^2-(k-1)^2\|x^k-x^{k-1}\|^2+4k\|x^{k+1}-x^{k}\|^2\\
	\leq&2\gamma_{k}\mu_{k}(k+\alpha-2)^2W_{k}-2\gamma_{k+1}\mu_{k+1}(k+\alpha-1)^2W_{k+1}
	+2\gamma_{k+1}\mu_{k+1}(k+\alpha-1)W_{k}.
	\end{aligned}\]
	Summing up the above inequality for $k=1,2,\ldots,K$, we obtain
	\begin{equation}\label{add1}
	\begin{aligned}
	&K^2\|x^{K+1}-x^{K}\|^2+4\sum_{k=1}^Kk\|x^{k+1}-x^{k}\|^2 \\
	\leq &2\gamma_{1}\mu_{1}(\alpha-1)^2W_{1}
	+2\sum_{k=1}^K\gamma_{k+1}\mu_{k+1}(k+\alpha-1)W_{k}.
	\end{aligned}
	\end{equation}
	Since $W_1\leq\mathcal{E}_1$,
	letting $K$ tend to infinity in the above inequality, by (\ref{prop2-3}) and (\ref{add1}), we have $\sum_{k=1}^{\infty}k\|x^{k+1}-x^{k}\|^2
	<\infty$.
\end{proof}
\subsection{Convergence rate for the objective values}\label{section3.3}
Thanks to the above analysis, we are now ready to give the global convergence rate of $f(x^k)$
to $\min_{\mathcal{X}}f$.
\begin{theorem}\label{theorem1}
	Let $\{x^k\}$ be the sequence generated by the SAPG algorithm. Then,
	\begin{equation}\label{thm1-3}
	\lim_{k\rightarrow\infty}(k+\alpha-2)\ln^{-\sigma}(k+\alpha-1)(f(x^k)-\min_{\mathcal{X}}f)=0
	\end{equation}
	and
	\begin{equation}\label{thm1-2}
	\liminf_{k\rightarrow\infty}(k+\alpha-2)\ln^{1-\sigma}(k+\alpha-2)(f(x^k)-\min_{\mathcal{X}}f)=0.
	\end{equation}
\end{theorem}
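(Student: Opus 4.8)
The strategy is to transfer everything to the surrogate gap $W_k=\tilde f(x^k,\mu_k)+\kappa\mu_k-f(x^*)$ via the sandwich $0\le f(x^k)-\min_{\mathcal X}f\le W_k$ recorded in \eqref{prop1-15}, and then to combine the three conclusions of Proposition \ref{prop2} with the closed form $\mu_k=\mu_0\big((k+\alpha-2)\ln^{\sigma}(k+\alpha-2)\big)^{-1}$ coming from \eqref{algo2} and the lower bound $\gamma_k\ge\underline\gamma$ of Lemma \ref{lemma1}. Write $a_k:=(k-1)^2\|x^k-x^{k-1}\|^2$ and $\theta_k:=\gamma_k\mu_k(k+\alpha-2)^2W_k$, so that the quantity $\alpha_k$ of \eqref{eq6} is $a_k+2\theta_k$, and note the two identities $\theta_k/(k+\alpha-2)=\gamma_k\mu_k(k+\alpha-2)W_k$ and $\theta_k=\gamma_k\mu_0(k+\alpha-2)\ln^{-\sigma}(k+\alpha-2)W_k$.

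To get \eqref{thm1-3} I would first establish the elementary fact that if $c_k\ge0$, $\lim_k c_k$ exists and $\sum_k c_k/k<\infty$, then $\lim_k c_k=0$ (if the limit were $\ell>0$ then $c_k\ge\ell/2$ for large $k$ and $\sum c_k/k$ would dominate $\sum \ell/(2k)=\infty$). Proposition \ref{prop2}(ii) tells us $\lim_k(a_k+2\theta_k)$ exists; Proposition \ref{prop2}(iii) gives $\sum_k a_k/(k-1)<\infty$ (the $k=1$ term vanishes); and Proposition \ref{prop2}(i) is precisely $\sum_k\theta_k/(k+\alpha-2)<\infty$, whence $\sum_k\theta_k/k<\infty$ because $(k+\alpha-2)/k\le\alpha-1$ for $k\ge1$. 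Hence $\sum_k(a_k+2\theta_k)/k<\infty$, and the elementary fact forces $\lim_k(a_k+2\theta_k)=0$; in particular $\theta_k\to0$. Since $W_k\ge0$ and $\ln(k+\alpha-1)\ge\ln(k+\alpha-2)>0$, we conclude
\[
0\le(k+\alpha-2)\ln^{-\sigma}(k+\alpha-1)\big(f(x^k)-\min_{\mathcal X}f\big)\le(k+\alpha-2)\ln^{-\sigma}(k+\alpha-2)W_k=\frac{\theta_k}{\gamma_k\mu_0}\le\frac{\theta_k}{\underline\gamma\mu_0}\longrightarrow0,
\]
and the squeeze theorem yields \eqref{thm1-3}.

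For \eqref{thm1-2} I would argue by contradiction. If $c:=\liminf_{k\to\infty}(k+\alpha-2)\ln^{1-\sigma}(k+\alpha-2)W_k>0$, then $W_k\ge \tfrac{c}{2}\,(k+\alpha-2)^{-1}\ln^{\sigma-1}(k+\alpha-2)$ for all large $k$, so
\[
\gamma_k\mu_k(k+\alpha-2)W_k=\gamma_k\mu_0\,\frac{W_k}{\ln^{\sigma}(k+\alpha-2)}\ge\frac{\underline\gamma\mu_0\,c}{2}\cdot\frac{1}{(k+\alpha-2)\ln(k+\alpha-2)}.
\]
Summing in $k$ and invoking $\sum_n 1/(n\ln n)=\infty$ contradicts Proposition \ref{prop2}(i). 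Therefore $\liminf_k(k+\alpha-2)\ln^{1-\sigma}(k+\alpha-2)W_k=0$, and applying the monotonicity of $\liminf$ to $0\le f(x^k)-\min_{\mathcal X}f\le W_k$ gives \eqref{thm1-2}.

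The one place that needs care — the ``main obstacle'' — is the passage from mere boundedness of $\theta_k$ (which is immediate from Proposition \ref{prop1}(ii)) to $\theta_k\to0$: one must resist splitting $\lim_k(a_k+2\theta_k)$ into the limits of its two nonnegative parts (these need not exist individually) and instead apply the divergent-harmonic-series lemma to the full Lyapunov-type quantity $a_k+2\theta_k=\alpha_k$ at once. Everything else is bookkeeping with the explicit $\mu_k$ and the two classical facts $\sum 1/k=\infty$ and $\sum 1/(n\ln n)=\infty$.
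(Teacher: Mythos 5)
Your proof is correct and takes essentially the same route as the paper's: both rest on the sandwich $0\le f(x^k)-\min_{\mathcal X}f\le W_k$, the summability from Proposition \ref{prop2}(i) and (iii), the existence of $\lim_k\alpha_k$ from Proposition \ref{prop2}(ii), and a divergent-series argument to force that limit to be zero. The only cosmetic difference is that the paper deduces $\liminf_k \ln(k+\alpha-2)\,\alpha_k=0$ in one shot from $\sum 1/\bigl(n\ln n\bigr)=\infty$ and reads both conclusions off that single fact, whereas you obtain \eqref{thm1-3} via $\sum 1/k=\infty$ and then handle \eqref{thm1-2} by a separate contradiction using $\sum 1/\bigl(n\ln n\bigr)=\infty$.
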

\begin{proof}
	Combining (i) and (iii) in Proposition \ref{prop2}, we have
	$$\sum_{k=1}^{\infty}\left[(k-1)\|x^{k}-x^{k-1}\|^2+2\gamma_{k}\mu_{k}(k+\alpha-2)W_k\right]<\infty,
	$$
	which implies
	\[
	\begin{aligned}
	&\sum_{k=1}^{\infty}\frac{\ln(k+\alpha-2)}{(k+\alpha-2)\ln(k+\alpha-2)}\left[(k-1)^2\|x^{k}-x^{k-1}\|^2+2\gamma_{k}\mu_{k}(k+\alpha-2)^2W_k\right]\\
	\leq&\sum_{k=1}^{\infty}\frac{1}{k+\alpha-2}\left[(k-1)(k+\alpha-2)\|x^{k}-x^{k-1}\|^2
	+2\gamma_{k}\mu_{k}(k+\alpha-2)^2W_k\right]\\
	=&\sum_{k=1}^{\infty}\left[(k-1)\|x^{k}-x^{k-1}\|^2
	+2\gamma_{k}\mu_{k}(k+\alpha-2)W_k\right]
	<\infty.
	\end{aligned}
	\]
	Observe that $\sum_{k=1}^{\infty}\frac{1}{(k+\alpha-1)\ln(k+\alpha-1)}=\infty$, then
	\begin{equation}\label{prop2-6}
	\liminf_{k\rightarrow\infty}\ln(k+\alpha-2)\left((k-1)^2\|x^{k}-x^{k-1}\|^2
	+2\gamma_{k}\mu_{k}(k+\alpha-2)^2W_k\right)=0.
	\end{equation}
	Combining this with Proposition \ref{prop2}-(ii), we obtain
	\begin{equation}
	\lim_{k\rightarrow\infty}(k-1)^2\|x^{k}-x^{k-1}\|^2+2\gamma_{k}\mu_{k}(k+\alpha-2)^2W_k=0,
	\end{equation}
	by $W_k\geq0$, which further implies
	\begin{equation}\label{prop2-5}
	\lim_{k\rightarrow\infty}(k-1)^2\|x^{k}-x^{k-1}\|^2=0\quad\mbox{and}\quad
	\lim_{k\rightarrow\infty}\gamma_{k}\mu_{k}(k+\alpha-2)^2W_k=0.
	\end{equation}
	Recalling the definition of $\mu_{k}$ in (\ref{algo2}), $\gamma_{k}\geq\underline{\gamma}>0$ and (\ref{prop1-15}), the second equation in (\ref{prop2-5}) implies
	(\ref{thm1-3}).
	Similarly, (\ref{prop2-6}) gives
	(\ref{thm1-2}).
\end{proof}

(\ref{thm1-3}) in Theorem \ref{theorem1} illustrates that for any $\sigma\in(\frac{1}{2},1]$ in the SAPG algorithm, it holds $f(x^k)-\min_{\mathcal{X}}f
=o(\ln^{\sigma}k/k)$.

\subsection{Sequential convergence} \label{section3.4}
In this subsection, we begin to analyze the convergence of the iterates generated by the SAPG algorithm

Opial's Lemma was first used to analyze the convergence of nonlinear contraction semigroups \cite{Bruck1975}. Here, we state the discrete version of it to prepare for analyzing the convergence of sequence.
\begin{lemma}\cite{opial}\label{opi}
	Let $S$ be a nonempty subset of $\mathbb{R}^n$ and $\{z_k\}$ be a sequence of $\mathbb{R}^n$. Assume that
	\begin{itemize}
		\item[{\rm(i)}] $\lim_{k\to\infty}\|z_k-z\|$ exists for every $z\in S$;
		
		\item[{\rm(i)}] every sequential limit point of sequence $\{z_k\}$ as $k\to\infty$ belongs to $S$.
	\end{itemize}
	Then, as $k\to\infty$, $\{z_k\}$ converges to a point in $S$.
\end{lemma}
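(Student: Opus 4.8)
The plan is to deduce convergence from boundedness together with uniqueness of the cluster point. First I would use hypothesis (i) to get boundedness: since $S\neq\emptyset$, fix any $\hat z\in S$; then $\{\|z_k-\hat z\|\}$ converges, hence is bounded, so $\{z_k\}$ is a bounded sequence in $\mathbb{R}^n$. By the Bolzano--Weierstrass theorem $\{z_k\}$ has at least one sequential limit point, and by hypothesis (ii) every such point lies in $S$.

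Next I would prove that the limit point is unique. Let $p$ and $q$ be two sequential limit points of $\{z_k\}$; by (ii) both belong to $S$, hence by (i) the limits $\lim_{k\to\infty}\|z_k-p\|$ and $\lim_{k\to\infty}\|z_k-q\|$ both exist. Expanding the squared norms,
\[
\|z_k-p\|^2-\|z_k-q\|^2=2\langle z_k,\,q-p\rangle+\|p\|^2-\|q\|^2,
\]
so $\langle z_k,\,q-p\rangle$ converges to some $\ell\in\mathbb{R}$. Passing to a subsequence of $\{z_k\}$ that converges to $p$ yields $\ell=\langle p,\,q-p\rangle$, and passing to one converging to $q$ yields $\ell=\langle q,\,q-p\rangle$; subtracting gives $\|p-q\|^2=0$, i.e. $p=q$.

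Finally I would invoke the elementary fact that a bounded sequence in $\mathbb{R}^n$ with a single sequential limit point converges to it: otherwise some subsequence would stay at distance at least $\varepsilon$ from that point for some $\varepsilon>0$, and being bounded it would itself admit a limit point, producing a second cluster point of $\{z_k\}$ and contradicting uniqueness. Therefore $\{z_k\}$ converges, and its limit, being a sequential limit point, lies in $S$ by (ii).

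There is no serious obstacle here; this is the standard proof of Opial's lemma, and the entire content is routine. The only points that need care are applying hypothesis (ii) to the cluster points produced by Bolzano--Weierstrass, so that hypothesis (i) may legitimately be invoked at them, and stating cleanly the ``bounded $+$ unique cluster point $\Rightarrow$ convergent'' step at the end.
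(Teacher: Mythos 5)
Your proof is correct and complete; it is the standard argument for the discrete Opial lemma (boundedness from (i), Bolzano--Weierstrass plus (ii) to locate cluster points in $S$, uniqueness via the expansion $\|z_k-p\|^2-\|z_k-q\|^2=2\langle z_k,q-p\rangle+\|p\|^2-\|q\|^2$, and the bounded-sequence-with-unique-cluster-point conclusion). The paper simply cites this lemma from the literature without proof, so there is nothing to compare against; your write-up would serve as a valid self-contained proof.
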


To prove the sequential convergence, we also need recall the following inequality on nonnegative sequences, which will be used in the forthcoming sequential
convergence result.
\begin{lemma}\label{lemma3}\cite{Attouch-MP}
	Assume $\alpha\geq3$. Let $\{a_k\}$ and $\{\omega_k\}$ be two sequences of nonnegative numbers such that
	\[
	a_{k+1}\leq\frac{k-1}{k+\alpha-1}a_k+\omega_k\]
	for all $k\geq1$. If $\sum_{k=1}^{\infty}k\omega_k<\infty$, then $\sum_{k=1}^{\infty}a_k<\infty$.
\end{lemma}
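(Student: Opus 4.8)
The plan is to telescope the recursion after multiplying by a suitable weight that turns the contraction factor $\frac{k-1}{k+\alpha-1}$ into a quantity with a clean product structure. Concretely, I would look for positive weights $t_k$ such that $t_{k+1}\cdot\frac{k-1}{k+\alpha-1}\le t_k$, which turns $t_{k+1}a_{k+1}\le t_ka_k+t_{k+1}\omega_k$ into a telescoping chain. A natural choice is $t_k$ of the order of $k^{\alpha-2}$; more precisely one can take $t_k=\frac{(k+\alpha-3)!}{(k-2)!}$ (for $k\ge 2$, with a suitable convention at $k=1$), so that $\frac{t_{k+1}}{t_k}=\frac{k+\alpha-2}{k-1}$ and hence $t_{k+1}\frac{k-1}{k+\alpha-1}=t_k\frac{k+\alpha-2}{k+\alpha-1}\le t_k$. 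Since $\alpha\ge 3$, we have $t_k\sim C k^{\alpha-2}$ and in particular $t_k\ge c\,k$ for large $k$ (indeed $t_k$ grows at least linearly when $\alpha\ge 3$).

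With such weights in hand, I would argue as follows. First, set $b_k:=t_ka_k\ge 0$. From the hypothesis,
\begin{equation*}
b_{k+1}=t_{k+1}a_{k+1}\le t_{k+1}\frac{k-1}{k+\alpha-1}a_k+t_{k+1}\omega_k\le t_ka_k+t_{k+1}\omega_k=b_k+t_{k+1}\omega_k.
\end{equation*}
Summing from $k=1$ to $K$ gives $b_{K+1}\le b_1+\sum_{k=1}^{K}t_{k+1}\omega_k$. Because $t_{k+1}=O(k^{\alpha-2})$ is dominated by a constant times $k$ only when $\alpha\le 3$; for $\alpha>3$ this crude bound is not summable against $\sum k\omega_k<\infty$, so I must be more careful. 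The correct route is: do not pass to $b_k$, but instead observe that the telescoped inequality $t_{k+1}a_{k+1}-t_ka_k\le t_{k+1}\omega_k$ combined with $t_k\ge c k$ and $t_{k+1}/k$ bounded... — this still fails for large $\alpha$. Hence the genuinely right choice of weight is $t_k=k$ itself (or $k+\alpha-1$), giving a bona fide Gronwall-type estimate rather than exact telescoping.

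So the actual plan: multiply the recursion by $(k+\alpha-1)$ to get $(k+\alpha-1)a_{k+1}\le (k-1)a_k+(k+\alpha-1)\omega_k\le (k+\alpha-2)a_k+(k+\alpha-1)\omega_k$, since $k-1\le k+\alpha-2$. Writing $c_k:=(k+\alpha-2)a_k\ge 0$, this reads $c_{k+1}\le c_k-(\alpha-1)a_{k+1}+(k+\alpha-1)\omega_k$, i.e. $(\alpha-1)a_{k+1}\le c_k-c_{k+1}+(k+\alpha-1)\omega_k$. Summing from $k=1$ to $K$, the $c_k$ terms telescope: $(\alpha-1)\sum_{k=1}^{K}a_{k+1}\le c_1-c_{K+1}+\sum_{k=1}^{K}(k+\alpha-1)\omega_k\le (\alpha-1)a_1+\sum_{k=1}^{\infty}(k+\alpha-1)\omega_k$, using $c_{K+1}\ge 0$. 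The hypothesis $\sum_{k=1}^{\infty}k\omega_k<\infty$ (together with $\alpha$ fixed, so $\sum(k+\alpha-1)\omega_k<\infty$) makes the right-hand side finite uniformly in $K$, hence $\sum_{k=1}^{\infty}a_k<\infty$ after adding the $a_1$ term. The main obstacle — and the only subtle point — is precisely the choice of multiplier: one wants the factor that makes $(k-1)$ and the shifted index line up so the $c_k$'s telescope with a residual $-(\alpha-1)a_{k+1}$ of the right sign, which is what forces the weight $(k+\alpha-1)$ rather than any faster-growing sequence.
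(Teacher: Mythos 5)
The paper does not prove this lemma---it is quoted from \cite{Attouch-MP} without proof---so there is no in-paper argument to compare against; your final argument is correct and is essentially the standard telescoping proof from that reference. The first half of your write-up (the factorial weights $t_k\sim k^{\alpha-2}$) is a false start that you rightly abandon; the last paragraph is the real proof, and it works, up to one bookkeeping slip: with $c_k:=(k+\alpha-2)a_k$ the identity is $(k-1)a_k=c_k-(\alpha-1)a_k$, so the recursion reads $c_{k+1}\le c_k-(\alpha-1)a_k+(k+\alpha-1)\omega_k$ with residual $(\alpha-1)a_k$, not $(\alpha-1)a_{k+1}$. (If you prefer the residual $(\alpha-1)a_{k+1}$, use instead $d_k:=(k-1)a_k$, for which $(k+\alpha-1)a_{k+1}=d_{k+1}+(\alpha-1)a_{k+1}$ and $d_1=0$.) Either way, summing, using $c_{K+1}\ge 0$ (resp.\ $d_{K+1}\ge0$) and $\sum_k(k+\alpha-1)\omega_k\le\sum_k k\omega_k+(\alpha-1)\sum_k k\omega_k<\infty$ yields $\sum_k a_k<\infty$ as you conclude.
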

\begin{theorem}\label{theorem2} (\textbf{Sequential convergence})
	Let $\{x^k\}$ be the sequence generated by the SAPG algorithm.
	Then, $\{x^k\}$ converges to a point belonging to $\arg\min_{\mathcal{X}}f$ as $k\rightarrow\infty$.
\end{theorem}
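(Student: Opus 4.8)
The plan is to invoke Opial's Lemma (Lemma \ref{opi}) with $S=\arg\min_{\mathcal{X}}f$, so two things must be checked: that $\lim_{k\to\infty}\|x^k-x^*\|$ exists for every $x^*\in\arg\min_{\mathcal{X}}f$, and that every sequential limit point of $\{x^k\}$ lies in $\arg\min_{\mathcal{X}}f$. The second is the easy half. From Theorem \ref{theorem1}, and since the prefactor $(k+\alpha-2)\ln^{-\sigma}(k+\alpha-1)\to\infty$, we get $f(x^k)\to\min_{\mathcal{X}}f$. If $x^{k_j}\to\bar x$, then $\bar x\in\mathcal{X}$ because $\mathcal{X}$ is closed and each $x^{k_j}\in\mathcal{X}$ by Lemma \ref{lemma1}, and the lower semicontinuity of $f$ gives $f(\bar x)\le\liminf_j f(x^{k_j})=\min_{\mathcal{X}}f$, whence $\bar x\in\arg\min_{\mathcal{X}}f$.

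For the first (Fej\'er-type) property I would argue as follows. Fix $x^*\in\arg\min_{\mathcal{X}}f$ and put $h_k:=\tfrac12\|x^k-x^*\|^2$ and $\beta_k:=\frac{k-1}{k+\alpha-1}$. Writing (\ref{lem2-3}) with $x=x^*$ and using the three-point identity (\ref{prop1-14}) with $a=x^{k+1}$, $b=y^k$, $c=x^*$ to clear the inner product, one obtains
\[
\|x^{k+1}-x^*\|^2-\|y^k-x^*\|^2\le 2\gamma_{k+1}\mu_{k+1}\big(\tilde f(x^*,\mu_{k+1})-\tilde f(x^{k+1},\mu_{k+1})\big).
\]
Then (\ref{eq-s2}), together with $\tilde f(x^{k+1},\mu_{k+1})\ge f(x^{k+1})-\kappa\mu_{k+1}\ge f(x^*)-\kappa\mu_{k+1}$, bounds the right-hand side by $4\kappa\gamma_{k+1}\mu_{k+1}^2$. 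Expanding $\|y^k-x^*\|^2$ via $y^k=x^k+\beta_k(x^k-x^{k-1})$ and a further squared-norm identity turns this into
\[
h_{k+1}-h_k\le\beta_k(h_k-h_{k-1})+\|x^k-x^{k-1}\|^2+2\kappa\gamma_{k+1}\mu_{k+1}^2,
\]
where $0\le\beta_k\le1$ was used to absorb the $\beta_k+\beta_k^2$ factor.

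Taking positive parts and setting $a_k:=(h_k-h_{k-1})_+$ and $\omega_k:=\|x^k-x^{k-1}\|^2+2\kappa\gamma_{k+1}\mu_{k+1}^2$ gives $a_{k+1}\le\frac{k-1}{k+\alpha-1}a_k+\omega_k$, which is exactly the hypothesis of Lemma \ref{lemma3} provided $\sum_k k\omega_k<\infty$. Both pieces are under control: $\sum_k k\|x^k-x^{k-1}\|^2<\infty$ by Proposition \ref{prop2}(iii) combined with $\|x^k-x^{k-1}\|^2=o(1/k^2)$ from (\ref{prop2-5}), and $k\gamma_{k+1}\mu_{k+1}^2\le\gamma_0\mu_0^2\,(k+\alpha-1)^{-1}\ln^{-2\sigma}(k+\alpha-1)$ is summable precisely because $\sigma>\tfrac12$ makes $2\sigma>1$. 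Hence $\sum_k a_k<\infty$, i.e. $\sum_k(h_k-h_{k-1})_+<\infty$, and since $h_k\ge0$ Lemma \ref{add2} yields the existence of $\lim_k h_k$, equivalently of $\lim_k\|x^k-x^*\|$. Both Opial hypotheses hold, so $\{x^k\}$ converges to a point of $\arg\min_{\mathcal{X}}f$.

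I expect the main obstacle to be this first hypothesis: one must wring out of (\ref{lem2-3}) a recursion in $h_k$ whose error terms are summable after multiplication by $k$. The delicate point is that the extrapolation produces a $\beta_k^2\|x^k-x^{k-1}\|^2$-type remainder, and its $k$-weighted sum is finite only because the Lyapunov analysis already secured $\sum_k k\|x^k-x^{k-1}\|^2<\infty$; simultaneously the smoothing error $\mu_{k+1}^2$ must decay fast enough, which is exactly what the choice $\sigma>1/2$ in (\ref{algo2}) guarantees. Everything else is the bookkeeping of squared-norm identities and an appeal to the two sequence lemmas from \cite{Attouch-MP}.
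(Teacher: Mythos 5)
Your proposal is correct and follows essentially the same route as the paper: the easy half via $f(x^k)\to\min_{\mathcal{X}}f$ from Theorem \ref{theorem1}, and the Fej\'er-type half by writing (\ref{lem2-3}) at $x=x^*$, clearing the inner product with (\ref{prop1-14}), bounding the smoothing error by $4\kappa\gamma_{k+1}\mu_{k+1}^2$, and feeding the resulting recursion $(h_{k+1}-h_k)_+\le\frac{k-1}{k+\alpha-1}(h_k-h_{k-1})_++\omega_k$ into Lemma \ref{lemma3} (using Proposition \ref{prop2}(iii) and $\sigma>\tfrac12$) and then Lemma \ref{add2} before invoking Opial's Lemma. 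The only cosmetic differences are your factor-$\tfrac12$ normalization of $h_k$ and the superfluous appeal to $\|x^k-x^{k-1}\|^2=o(1/k^2)$, since Proposition \ref{prop2}(iii) already gives the needed $k$-weighted summability directly.
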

\begin{proof}
	To apply Lemma \ref{opi} to prove the convergence of $\{x^k\}$, we first prove that any cluster point of $\{x^k\}$
	belongs to $\arg\min_{\mathcal{X}}f$. Suppose $\bar{x}$ is a cluster point of $\{x^k\}$ with subsequence $\{x^{k_j}\}$, then by the continuity of $f$ and $\{x^k\}
	\subseteq\mathcal{X}$, we have
	\[f(\bar{x})=\lim_{j\rightarrow\infty}f(x^{k_j})=\min_{\mathcal{X}}f,
	\]
	which implies $\bar{x}\in\arg\min_{\mathcal{X}}f$.
	
	Next, if we can verify that for every $x^*\in \arg\min_{\mathcal{X}}f$, $\lim_{k\rightarrow\infty}\|x^k-x^*\|$ exists, then we conclude
	the sequential convergence of $\{x^k\}$ by Lemma \ref{opi}.
	
	Set
	\begin{equation}\label{eq9}
	h_k:=\|x^k-x^*\|^2
	\end{equation}
	with $x^*\in \arg\min_{\mathcal{X}}f$.
	Multiplying $2\gamma_{k+1}\mu_{k+1}$ on the both sides of (\ref{prop1-2}) and by $W_{k+1}\geq0$, we obtain
	\[
	2\langle y^k-x^{k+1},y^k-x^*\rangle-\|x^{k+1}-y^k\|^2+4\kappa\gamma_{k+1}\mu_{k+1}^2\geq0,
	\]
	which can be reformulated by
	\[
	\|y^k-x^*\|^2-\|x^{k+1}-x^*\|^2+4\kappa\gamma_{k+1}\mu_{k+1}^2\geq0.
	\]
	Then, recalling the definition of $y^k$, we get
	\[\begin{aligned}
	\|x^{k+1}-x^*\|^2\leq&\|y^k-x^*\|^2+4\kappa\gamma_{k+1}\mu_{k+1}^2\\
	=&\|x^k-x^*\|^2+\left(\frac{k-1}{k+\alpha-1}\right)^2\|x^k-x^{k-1}\|^2\\
	&+2\left(\frac{k-1}{k+\alpha-1}\right)\langle x^k-x^*,x^k-x^{k-1}\rangle+4\kappa\gamma_{k+1}\mu_{k+1}^2\\
	=&\|x^k-x^*\|^2+\left(\left(\frac{k-1}{k+\alpha-1}\right)^2+\left(\frac{k-1}{k+\alpha-1}\right)\right)
	\|x^k-x^{k-1}\|^2\\
	&+\left(\frac{k-1}{k+\alpha-1}\right)\left(
	\|x^k-x^*\|^2-\|x^{k-1}-x^*\|^2\right)+4\kappa\gamma_{k+1}\mu_{k+1}^2\\
	\leq&\|x^k-x^*\|^2+2\|x^k-x^{k-1}\|^2\\
	&+\left(\frac{k-1}{k+\alpha-1}\right)\left(
	\|x^k-x^*\|^2-\|x^{k-1}-x^*\|^2\right)+4\kappa\gamma_{k+1}\mu_{k+1}^2,
	\end{aligned}\]
	which can be reformulated by
	\begin{equation}\label{eq7}
	(h_{k+1}-h_k)_+\leq\left(\frac{k-1}{k+\alpha-1}\right)(h_k-h_{k-1})_++2\|x^k-x^{k-1}\|^2+4\kappa\gamma_{k+1}\mu_{k+1}^2.
	\end{equation}
	%
	%which together with (\ref{thm2-3}) gives
	%\[\begin{aligned}
	%\epsilon_{k+1}-\epsilon_k
	%=&(k+\alpha-1)(h_{k+1}-h_k)-(k-1)(h_k-h_{k-1})\\
	%\leq&2(k+\alpha-1)\|x^k-x^{k-1}\|^2+4\kappa(k+\alpha-1)\gamma_{k+1}\mu_{k+1}^2.
	%\end{aligned}
	%\]
	Recalling the definition of $\mu_{k+1}$ in (\ref{algo2}) and the non-increasing property of $\gamma_k$ proved in Lemma \ref{lemma1}-(i), we get
	\begin{equation}\label{eq8}
	\sum_{k=1}^{\infty} k\gamma_{k+1}\mu_{k+1}^2
	\leq\sum_{k=0}^{\infty}\gamma_0\frac{1}{(k+\alpha-1)\ln^{2\sigma}(k+\alpha-1)}<\infty,
	\end{equation}
	which uses $\sigma>\frac{1}{2}$ in the last inequality.
	
	Applying the above result in Lemma \ref{lemma3} with $a_k=(h_{k}-h_{k-1})_+$, and by Proposition \ref{prop2}-(iii), we have
	\begin{equation}\label{eq4}
	\sum_{k=1}^{\infty}(h_{k+1}-h_k)_+<\infty.
	\end{equation}
	As a consequence, we can easily deduce the convergence of $h_k$ as $k\rightarrow\infty$ by the nonnegativity of $\{h_k\}$ and Lemma \ref{add2}. This completes the proof.
\end{proof}
\begin{remark}
	To obtain (\ref{eq4}), one key point is the additivity of
	$\sum_{k=1}^{\infty} k\mu_{k+1}^2$, which is guaranteed by the definition of $\mu_{k+1}$ in (\ref{algo2}) with $\sigma>\frac{1}{2}$.
\end{remark}

\subsection{Stability}
In this section, we consider an inexact version of the SAPG algorithm in Algorithm \ref{algorithm1}. The inexact version comes from the error on the computation of $\nabla\tilde{c}(y^k,\mu_{k+1})$ and we will give a tolerance estimate for the error sequence to guarantee the all convergence properties of SAPG algorithm in Theorem \ref{theorem1} and Theorem \ref{theorem2}. Here, since we allow some errors on the calculation of $\nabla\tilde{c}$, we need the prior knowledge on the Lipschitz constant of $\nabla\tilde{c}(x,\mu)$, which means that the constant $L$ in Definition \ref{defn1}-(vi) is known in advance,
and we fix the parameter $\gamma_k:=L^{-1}$ in Algorithm \ref{algorithm1}. The inexact version of Algorithm \ref{algorithm1} is shown in Algorithm \ref{algorithm2} and named by inexact smoothing accelerated proximal gradient (ISAPG) algorithm, where $\epsilon_k\in\mathbb{R}^n$ is an unknown error.
\begin{algorithm}[!h]
	\caption{Inexact Smoothing Accelerated Proximal Gradient (ISAPG) Algorithm }\label{algorithm2}
	\begin{itemize}
		\item[\textbf{Input:}]
		Take initial point $x^{-1}=x^0\in\mathcal{X}$ and $\mu_0\in\mathbb{R}_{++}$. Choose parameters $\alpha>3$ and $\sigma\in(\frac{1}{2}, 1]$.
		
		Set $k=0$.
		\item[\textbf{Step 1:}] Let
		\begin{equation}\label{i-algo1}
		y^k=x^k+\frac{k-1}{k+\alpha-1}(x^k-x^{k-1}) ,
		\end{equation}
		\begin{equation}\label{i-algo2}
		\mu_{k+1}=\frac{\mu_0}{(k+\alpha-1)\ln^{\sigma}(k+\alpha-1)}.
		\end{equation}
		\item[\textbf{Step 2:}] Compute
		\begin{equation}\label{i-algo3}
		x^{k+1}={\rm prox}_{L^{-1}{\mu_{k+1}}g}(y^k-L^{-1}\mu_{k+1}(\nabla\tilde{c}(y^k,\mu_{k+1})+{\epsilon_k})). \end{equation}
	\end{itemize}
\end{algorithm}

Notice that the Step 1 in Algorithm \ref{algorithm1} and Algorithm \ref{algorithm2}
are same. Denote
$$Q_k(x,y,\mu):=\tilde{c}(y,\mu)+\langle\nabla\tilde{c}(y,\mu)+\epsilon_k,x-y\rangle+\frac{1}{2}L\mu^{-1}\|x-y\|^2+g(x).$$
Then, $x^{k+1}$ in \eqref{i-algo3} is the solution of
$$\min_{x\in\mathcal{X}}Q_k(x,y^k,\mu_{k+1}).$$
This means that
$$y^k-L^{-1}\mu_{k+1}\nabla\tilde{c}(y^k,\mu_{k+1})\in
x^{k+1}+L^{-1}\mu_{k+1}(\partial g(x^{k+1})+\epsilon_k) + N_{\mathcal{X}}(x^{k+1}),$$
where $N_{\mathcal{X}}(x^{k+1})$ is the normal cone to $\mathcal{X}$ at $x^{k+1}$.
Due to the strong convexity of $Q_k$ to $x$ for fixed $y$ and $\mu$, we also have
\begin{equation}
Q_k(x,y^k,\mu_{k+1})\geq Q_k(x^{k+1},y^k,\mu_{k+1})+\frac{L}{2}\mu_{k+1}^{-1}\|x-x^{k+1}\|^2,\quad \forall x\in\mathcal{X}.
\end{equation}
The analysis method of the ISAPG algorithm is similar to the unperturbed case of SAPG algorithm. Hence, we state the main results, sketch the proofs to avoid repeating similar arguments, and underline the parts where additional techniques are required. Next, we first recall the discrete version of Gronwall-Bellman lemma.
\begin{proposition}\cite{Attouch-MP}\label{Attouch-lemma}
	Let $\{a_k\}$ be a sequence of nonnegative numbers satisfying
	\begin{equation*}
	a_k^2\leq c^2+\sum_{j=1}^k\beta_j a_j
	\end{equation*}
	for all $k\in\mathbb{N}$, where $c\geq 0$ and $\{\beta_j\}$ is a summable sequence of nonnegative numbers. Then, $a_k\leq c+\sum_{j=1}^{\infty}\beta_j$ for all $k\in\mathbb{N}$.
\end{proposition}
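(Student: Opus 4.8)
The plan is to linearize the quadratic recursion by passing to the running maximum of the sequence, after which everything reduces to solving one scalar quadratic inequality. Write $B := \sum_{j=1}^{\infty}\beta_j$, which is finite by hypothesis, and fix $k \ge 1$ (the case $k=0$ is immediate, since then the hypothesis reads $a_0^2 \le c^2$, hence $a_0 \le c \le c+B$). Set $S_k := \max_{1 \le j \le k} a_j \ge 0$. For every index $j$ with $1 \le j \le k$ the hypothesis gives
\[
a_j^2 \;\le\; c^2 + \sum_{i=1}^{j}\beta_i a_i \;\le\; c^2 + S_k\sum_{i=1}^{j}\beta_i \;\le\; c^2 + B\,S_k,
\]
where I used $a_i \le S_k$ for $i \le j \le k$ and $\sum_{i=1}^{j}\beta_i \le B$. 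Taking the maximum over $1 \le j \le k$ turns this into the scalar inequality $S_k^2 \le c^2 + B\,S_k$.

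The second step is to solve $S_k^2 - B\,S_k - c^2 \le 0$ with $S_k \ge 0$, which forces $S_k \le \tfrac12\bigl(B + \sqrt{B^2 + 4c^2}\bigr)$. I would then invoke the elementary bound $\sqrt{B^2+4c^2} \le B + 2c$ — legitimate because both sides are nonnegative and squaring reduces it to $0 \le 4cB$ — to conclude $S_k \le \tfrac12\bigl(B + B + 2c\bigr) = c + B$. Since $a_k \le S_k$ and $k \ge 1$ was arbitrary, this yields $a_k \le c + \sum_{j=1}^{\infty}\beta_j$ for all $k$, as claimed.

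I do not expect a genuine obstacle; the only delicate point is precisely the linearization. A naive induction on the target bound $a_k \le c + B$ reintroduces a quadratic in $a_k$ on the right-hand side through the term $\beta_k a_k$, so one would have to run that induction carefully — assuming $a_j \le c+B$ for $j<k$, bounding $a_k^2 \le c^2 + (c+B)\sum_{j<k}\beta_j + \beta_k a_k$, solving the resulting quadratic for $a_k$, and checking $c^2 + (c+B)\sum_{j\le k}\beta_j \le (c+B)^2$ via $\sum_{j\le k}\beta_j \le B$. Absorbing all the terms $a_i$ appearing in the sum into the single quantity $S_k$ makes that bookkeeping disappear, so the running-maximum route is the one I would write up.
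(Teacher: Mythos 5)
Your proof is correct: the reduction to the running maximum $S_k$ gives $S_k^2\leq c^2+B\,S_k$, and bounding the larger root by $c+B$ via $\sqrt{B^2+4c^2}\leq B+2c$ is airtight, with the $k=0$ case handled separately. The paper itself states this discrete Gronwall--Bellman lemma without proof, citing \cite{Attouch-MP}, and your running-maximum argument is essentially the same one used in that reference, so there is nothing further to reconcile.
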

\begin{theorem}\label{stathe}
	Let $\{x^k\}$ be the sequence generated by the ISAPG algorithm. If the errors satisfy $\sum_{k=1}^{\infty}\mu_{k+1}(k+\alpha-1)\|\epsilon_k\|<\infty$, then
	\begin{itemize}
		\item[{\rm(i)}]$\sum_{k=1}^{\infty}\mu_{k}(k+\alpha-2)W_{k}<\infty$,
		$\sum_{k=1}^{\infty}(k-1)\|x^{k}-x^{k-1}\|^2<\infty$;
		\item[{\rm(ii)}] $\lim_{k\rightarrow\infty}\left[(k-1)^2\|x^{k}-x^{k-1}\|^2+2L^{-1}\mu_{k}(k+\alpha-2)^2W_{k}\right]$ exists;
		\item[{\rm(iii)}] the conclusions in Theorem \ref{theorem1} and Theorem \ref{theorem2} hold.
	\end{itemize}
\end{theorem}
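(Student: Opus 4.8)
The plan is to rerun, step by step, the chain Lemma~\ref{lemma2} $\to$ Proposition~\ref{prop1} $\to$ Proposition~\ref{prop2} $\to$ Theorem~\ref{theorem1} $\to$ Theorem~\ref{theorem2} used for the exact algorithm, but now carrying along the extra inner products generated by $\epsilon_k$, and to show that the hypothesis $\sum_k\mu_{k+1}(k+\alpha-1)\|\epsilon_k\|<\infty$ makes every one of these extra terms summable, hence harmless. First I would prove the inexact counterpart of Lemma~\ref{lemma2}. Since $\gamma_k\equiv L^{-1}$ and $\nabla_x\tilde c(\cdot,\mu)$ is $L\mu^{-1}$-Lipschitz by Definition~\ref{defn1}-(vi), the descent inequality analogous to (\ref{algo4}) holds automatically with $\gamma_{k+1}\mu_{k+1}=L^{-1}\mu_{k+1}$ (no line search is needed, which is why $L$ must be known in advance). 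Combining this with the optimality of $x^{k+1}$ for $Q_k(\cdot,y^k,\mu_{k+1})$ and the convexity of $\tilde c(\cdot,\mu_{k+1})$, exactly as in (\ref{lem2-1})--(\ref{lem2-4}), gives, for every $x\in\mathcal{X}$,
\[
\tilde f(x^{k+1},\mu_{k+1})\le\tilde f(x,\mu_{k+1})+L\mu_{k+1}^{-1}\langle y^k-x^{k+1},y^k-x\rangle-\frac{L}{2}\mu_{k+1}^{-1}\|x^{k+1}-y^k\|^2+\langle\epsilon_k,x-x^{k+1}\rangle,
\]
i.e. (\ref{lem2-3}) with $\gamma_{k+1}=L^{-1}$ plus the single extra term $\langle\epsilon_k,x-x^{k+1}\rangle$.

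Next, with the energy $\mathcal{E}_k$ of (\ref{prop1-10}) (now with $\gamma_k\equiv L^{-1}$), I would write this inequality at $x=x^k$ and $x=x^*$, form the convex combination with weights $\frac{k}{k+\alpha-1}$ and $\frac{\alpha-1}{k+\alpha-1}$, and reuse the identity (\ref{prop1-14}). The key point is that the two error contributions collapse to $-\frac{\alpha-1}{k+\alpha-1}\langle\epsilon_k,u^{k+1}-x^*\rangle$, because $x^{k+1}-\frac{k}{k+\alpha-1}x^k-\frac{\alpha-1}{k+\alpha-1}x^*=\frac{\alpha-1}{k+\alpha-1}(u^{k+1}-x^*)$; multiplying by $\frac{2L^{-1}\mu_{k+1}}{\alpha-1}(k+\alpha-1)^2$ and running (\ref{prop1-6})--(\ref{prop1-8}) verbatim yields
\[
\mathcal{E}_{k+1}+\frac{2(\alpha-3)L^{-1}\mu_{k+1}}{\alpha-1}(k+\alpha-1)W_k\le\mathcal{E}_k+2L^{-1}\mu_{k+1}(k+\alpha-1)\|\epsilon_k\|\,\|u^{k+1}-x^*\|.
\]
Since $(\alpha-1)\|u^{k+1}-x^*\|^2\le\mathcal{E}_{k+1}$, the last term is $\le\beta_k\sqrt{\mathcal{E}_{k+1}}$ with $\beta_k:=\frac{2L^{-1}}{\sqrt{\alpha-1}}\mu_{k+1}(k+\alpha-1)\|\epsilon_k\|$, a summable sequence; dropping the nonnegative $W_k$-term and summing gives $\mathcal{E}_{k+1}\le\mathcal{E}_1+\sum_{j=1}^{k}\beta_j\sqrt{\mathcal{E}_{j+1}}$, and after the obvious index shift Proposition~\ref{Attouch-lemma} forces $\sup_k\mathcal{E}_k<\infty$. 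This is the genuinely new step: the perturbed Lyapunov sequence is no longer monotone, so its boundedness comes from a discrete Gronwall--Bellman argument rather than from monotonicity. Feeding $\sup_k\mathcal{E}_k<\infty$ back into the displayed recursion and telescoping then gives $\sum_k\mu_{k+1}(k+\alpha-1)W_k<\infty$, whence, exactly as in (\ref{pro_add}), $\sum_k\mu_k(k+\alpha-2)W_k<\infty$.

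For the remaining part of item (i) and for item (ii) I would repeat Proposition~\ref{prop2}(ii)--(iii) with errors. Taking $x=x^k$ in the inexact Lemma and using (\ref{prop1-14}) with $a=x^{k+1}$, $b=y^k$, $c=x^k$ produces a perturbed form of (\ref{prop2-4}) carrying the extra term $2L^{-1}\mu_{k+1}(k+\alpha-1)^2\langle\epsilon_k,x^k-x^{k+1}\rangle$; splitting $x^k-x^{k+1}=(y^k-x^{k+1})+\frac{k-1}{k+\alpha-1}(x^k-x^{k-1})$ and applying Young's inequality, the first piece is absorbed into the available $-\frac{L}{2}\mu_{k+1}^{-1}\|x^{k+1}-y^k\|^2$ at the price of an $O(\mu_{k+1}^2(k+\alpha-1)^2\|\epsilon_k\|^2)$ remainder, and the second is matched against $(k-1)\|x^k-x^{k-1}\|^2$. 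Writing $b_k:=\mu_{k+1}(k+\alpha-1)\|\epsilon_k\|\to0$ we get $\sum_k b_k^2<\infty$, so all remainders are summable, and the sequence $\alpha_k=(k-1)^2\|x^k-x^{k-1}\|^2+2L^{-1}\mu_k(k+\alpha-2)^2W_k$ satisfies $\alpha_{k+1}-\alpha_k\le 2L^{-1}\mu_{k+1}(k+\alpha-1)W_k+(\text{summable})$; Lemma~\ref{add2} then gives existence of $\lim_k\alpha_k$ (item (ii)), and the $\alpha>3$ estimate of Proposition~\ref{prop2}(iii) applied to the same inequality gives $\sum_k k\|x^{k+1}-x^k\|^2<\infty$, i.e. the second part of item (i). Item (iii) is then immediate: the proof of Theorem~\ref{theorem1} uses only $\sum_k\mu_k(k+\alpha-2)W_k<\infty$, $\sum_k(k-1)\|x^k-x^{k-1}\|^2<\infty$ and $\lim_k\alpha_k$ exists, so (\ref{thm1-3})--(\ref{thm1-2}) follow unchanged and in particular $f(x^k)\to\min_{\mathcal{X}}f$; for sequential convergence the cluster-point part of Theorem~\ref{theorem2} is identical, and multiplying the inexact Lemma at $x=x^*$ by $2L^{-1}\mu_{k+1}$ now gives $h_{k+1}\le\|y^k-x^*\|^2+4\kappa L^{-1}\mu_{k+1}^2+2L^{-1}\mu_{k+1}\langle\epsilon_k,x^*-x^{k+1}\rangle$, where the last term is $\le C\mu_{k+1}\|\epsilon_k\|$ because $\sup_k\mathcal{E}_k<\infty$ keeps $\{x^k\}$ bounded; one then obtains (\ref{eq7}) with the extra nonnegative forcing term $C\mu_{k+1}\|\epsilon_k\|$, and since $\sum_k k\mu_{k+1}\|\epsilon_k\|\le\sum_k\mu_{k+1}(k+\alpha-1)\|\epsilon_k\|<\infty$ together with $\sum_k k\|x^k-x^{k-1}\|^2<\infty$ and $\sum_k k\mu_{k+1}^2<\infty$, Lemma~\ref{lemma3}, Lemma~\ref{add2} and Lemma~\ref{opi} conclude exactly as in Theorem~\ref{theorem2}.

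The part I expect to be the main obstacle is the bookkeeping around boundedness of the perturbed energy: one must recast the Lyapunov recursion into exactly the self-referential form $a_k^2\le c^2+\sum_{j\le k}\beta_j a_j$ required by Proposition~\ref{Attouch-lemma}, and, more delicately, choose the Young splittings in the third step so that every error remainder is absorbed by $\sum_k b_k<\infty$, $\sum_k b_k^2<\infty$ or $\sum_k(k-1)\|x^k-x^{k-1}\|^2<\infty$, without producing a stray term such as $\sum_k k\,b_k^2$ that the stated summability condition on $\{\epsilon_k\}$ does not obviously control.
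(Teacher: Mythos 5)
Your overall architecture matches the paper's: derive the inexact descent inequality with the extra term $\langle\epsilon_k,x-x^{k+1}\rangle$, control the perturbed Lyapunov sequence by the discrete Gronwall--Bellman lemma (Proposition~\ref{Attouch-lemma}), and then rerun Propositions~\ref{prop1}--\ref{prop2} and Theorems~\ref{theorem1}--\ref{theorem2}. Your first Gronwall step is sound and essentially equivalent to the paper's: you apply Proposition~\ref{Attouch-lemma} to $a_k=\sqrt{\mathcal{E}_{k+1}}$, whereas the paper introduces the tail-corrected energy $\bar{\mathcal{E}}_k=\mathcal{E}_k+2L^{-1}\sum_{j\ge k}\mu_{j+1}(j+\alpha-1)\langle\epsilon_j,x^*-u^{j+1}\rangle$, shows it is non-increasing, and applies Gronwall to $a_k=\|u^k-x^*\|$; both yield boundedness of the energy and then $\sum_k\mu_{k+1}(k+\alpha-1)W_k<\infty$ by telescoping.

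The genuine gap is in your treatment of the error term in the increment recursion. After multiplying by $2L^{-1}\mu_{k+1}(k+\alpha-1)^2$, the perturbation contributes $2L^{-1}\mu_{k+1}(k+\alpha-1)^2\|\epsilon_k\|\,\|x^k-x^{k+1}\|=2L^{-1}b_k\,(k+\alpha-1)\|x^k-x^{k+1}\|$ with $b_k:=\mu_{k+1}(k+\alpha-1)\|\epsilon_k\|$, and your proposed Young splitting cannot absorb it: matching against $(k-1)\|x^k-x^{k-1}\|^2$ leaves a remainder of order $\sum_k k\,b_k^2$, which the hypothesis $\sum_k b_k<\infty$ does not control (take $\|\epsilon_k\|=\ln^{\sigma}(k)/j^2$ for $k=2^{2^j}$ and zero otherwise: $\sum_k b_k<\infty$ but $k\,b_k^2\to\infty$ along that subsequence), while matching against $(k-1)^2\|x^k-x^{k-1}\|^2$ injects the leading part of $\alpha_k$ itself into the right-hand side with a non-summable coefficient. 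You correctly flag this as the main obstacle but do not resolve it. The paper's resolution is a \emph{second} application of Proposition~\ref{Attouch-lemma}: summing the perturbed recursion and dropping nonnegative terms gives $(K+\alpha-1)^2\|x^{K+1}-x^K\|^2\le C^2+\sum_{k\le K}2L^{-1}b_k\,(k+\alpha-1)\|x^k-x^{k+1}\|$, which is exactly of the form $a_K^2\le c^2+\sum_j\beta_j a_j$ with $a_k=(k+\alpha-1)\|x^k-x^{k+1}\|$; Gronwall then yields a uniform bound $M$ on $a_k$, after which the error term is $\le 2L^{-1}Mb_k$, summable, and items (i)--(ii) follow as in Proposition~\ref{prop2}. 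This uniform bound $M$ is also what the paper uses (together with $\|u^k-x^*\|\le\Gamma$) to get boundedness of $\{x^k\}$ in the sequential-convergence step; your claim that boundedness of $\{x^k\}$ follows from $\sup_k\mathcal{E}_k<\infty$ alone is not quite right, since the energy only controls $u^k=x^k+\frac{k-1}{\alpha-1}(x^k-x^{k-1})$, not $x^k$ itself.
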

\begin{proof}
	(i). Since \eqref{algo4} always holds when $\hat{\gamma}_{k+1}=L^{-1}$, similar to the proof idea of Lemma \ref{lemma1} and Lemma \ref{lemma2}, it gives $x^k\in\mathcal{X}$, $\forall k$, $\lim_{k\rightarrow\infty}\mu_k=0$ and for all $x\in\mathcal{X}$,
	\begin{equation}\label{theorem3-1}
	\begin{aligned}
		&\tilde{f}(x^{k+1},\mu_{k+1}) \\
		\leq &\tilde{f}(x,\mu_{k+1})
	+\frac{1}{2}L\mu_{k+1}^{-1}\|x-y^k\|^2
	-\frac{1}{2}L\mu_{k+1}^{-1}\|x-x^{k+1}\|^2+\langle\epsilon_k,x-x^{k+1}\rangle.
	\end{aligned}
	\end{equation}
	In what follows, the key idea for proving the results in this theorem relies on the following energy function:
	$$\bar{\mathcal{E}}_k={\mathcal{E}}_k+2L^{-1}\sum_{j=k}^{\infty}\mu_{j+1}(j+\alpha-1)
	\langle\epsilon_j, x^*- u^{j+1}\rangle,$$
	where $x^*$, ${\mathcal{E}}_k$ and $u^k$ are defined as in \eqref{prop1-10} and \eqref{prop1-11}. Following the proof ideas in Proposition \ref{prop1}, we obtain
	$${\mathcal{E}}_{k+1}+\frac{2L^{-1}(\alpha-3)\mu_{k+1}}{\alpha-1}(k+\alpha-1)W_k
	\leq{\mathcal{E}}_k+2L^{-1}\mu_{k+1}(k+\alpha-1)\langle\epsilon_k,x^*-u^{k+1}\rangle,$$
	which implies
	\begin{equation}\label{theorem3-2}
	\bar{\mathcal{E}}_{k+1}+\frac{2L^{-1}(\alpha-3)\mu_{k+1}}{\alpha-1}(k+\alpha-1)W_k
	\leq\bar{\mathcal{E}}_k.
	\end{equation}
	Then, $\{\bar{\mathcal{E}}_k\}$ is non-increasing and $\bar{\mathcal{E}}_k\leq\bar{\mathcal{E}}_0$, for all $k\geq0$.
	By virtue of $\bar{\mathcal{E}}_k\leq\bar{\mathcal{E}}_0$ and $W_k\geq0$, we obtain
	$$\begin{aligned}
	\|u^k-x^*\|^2\leq&\frac{1}{\alpha-1}\mathcal{E}_0+\frac{2L^{-1}}{\alpha-1}\sum_{j=0}^{k-1}\mu_{j+1}(j+\alpha-1)\langle\epsilon_j,x^*-u^{j+1}\rangle\\
	\leq&\frac{1}{\alpha-1}\mathcal{E}_0+\frac{2L^{-1}}{\alpha-1}\sum_{j=1}^{k}\mu_{j}(j+\alpha-2)\|\epsilon_{j-1}\|\|u^{j}-x^*\|.
	\end{aligned}$$
	Applying Proposition \ref{Attouch-lemma} with $a_k=\|u^k-x^*\|$ and $\beta_j=\mu_{j}(j+\alpha-2)\|\epsilon_{j-1}\|$ to the above inequality, and by the condition on $\epsilon_k$ given in this theorem, we deduce that
	\begin{equation}\label{theorem3-7}
	\|u^k-x^*\|\leq\Gamma:=\sqrt{\frac{1}{\alpha-1}\mathcal{E}_0}+\frac{2L^{-1}}{\alpha-1}
	\sum_{j=1}^{\infty}\mu_j(j+\alpha-2)\|\epsilon_{j-1}\|,\quad\forall k\geq0.
	\end{equation}
	Then, the above bound with ${\mathcal{E}}_k\geq0$ gives
	$$\bar{\mathcal{E}}_k\geq-2L^{-1}\Gamma\sum_{j=k}^{\infty}\mu_{j+1}(j+\alpha-1)
	\|\epsilon_j\|>-\infty.$$
	Thus, $\bar{\mathcal{E}}_k$ is bounded from below and \eqref{theorem3-2} gives
	\begin{equation}\label{theorem3-3}
	\sum_{k=1}^{\infty}\mu_{k+1}(k+\alpha-1)W_k<\infty\quad\mbox{and}\quad
	\lim_{k\rightarrow\infty}\bar{\mathcal{E}}_k\quad \mbox{exists.}
	\end{equation}
By a same analysis of \eqref{pro_add}, it holds that
\begin{equation*}
\mu_k(k+\alpha-2)\frac{\ln^{\sigma}(\alpha-1)}{\ln^{\sigma}\alpha}\leq\mu_{k+1}(k+\alpha-1),
\end{equation*}
	this together with \eqref{theorem3-3}, we obtain
	$\sum_{k=1}^{\infty}\mu_{k}(k+\alpha-2)W_k<\infty.$

	Performing a similar analysis of \eqref{prop2-4}-\eqref{prop2-2} but not loosing $k+\alpha-1$ to $k$, we have
	\begin{equation}\label{theorem3-5}
	\begin{aligned}
	&(k+\alpha-1)^2\|x^{k+1}-x^k\|^2-(k-1)^2\|x^k-x^{k-1}\|^2\\
	&\quad +2L^{-1}\mu_{k+1}(k+\alpha-1)^2W_{k+1}-2L^{-1}\mu_k(k+\alpha-2)^2W_k\\
	\leq&2L^{-1}\mu_{k+1}(k+\alpha-1)W_k+2L^{-1}\mu_{k+1}(k+\alpha-1)^2\|\epsilon_k\|\|x^k-x^{k+1}\|.
	\end{aligned}
	\end{equation}
	Summing up the above inequality for $k=1,\ldots,K$ and thanks to $W_k\geq0$, we obtain
	\begin{equation}\label{theorem3-4}
	\begin{aligned}
	&(K+\alpha-1)^2\|x^{K+1}-x^K\|^2+\sum_{k=1}^{K-1}((k+\alpha-1)^2-k^2)\|x^{k+1}-x^k\|^2\\
	\leq& C^2+2L^{-1}\sum_{k=1}^K\mu_{k+1}(k+\alpha-1)^2\|\epsilon_k\|\|x^k-x^{k+1}\|,
	\end{aligned}
	\end{equation}
	where $C:=\sqrt{2L^{-1}(\mu_1(\alpha-1)^2 W_1+\sum_{k=1}^{\infty}\mu_{k+1}(k+\alpha-1)W_k)}$ is a finite number due to \eqref{theorem3-3}. Then, we deduce that, for $\forall K\geq1$,
	\begin{equation}\label{theorem3-6}
	(K+\alpha-1)^2\|x^{K+1}-x^K\|^2\leq C^2+2L^{-1}\sum_{k=1}^K\mu_{k+1}(k+\alpha-1)^2\|\epsilon_k\|\|x^k-x^{k+1}\|.
	\end{equation}
	Applying Proposition \ref{Attouch-lemma} with $a_k=(k+\alpha-1)\|x^k-x^{k+1}\|$ and $\beta_j=2L^{-1}\mu_{j+1}(j+\alpha-1)\|\epsilon_j\|$ into the above inequality gives
	\begin{equation}\label{theorem3-8}
	\begin{aligned}
	&(k+\alpha-1)\|x^{k+1}-x^k\| \\
	\leq &M:=C+2L^{-1}\sum_{j=1}^{\infty}\mu_{j+1}(j+\alpha-1)\|\epsilon_j\|<\infty,\quad\forall k\geq1.
	\end{aligned}
	\end{equation}
	Substituting the above bound to \eqref{theorem3-4} and recalling $\alpha\geq3$, we have
	$$4\sum_{k=1}^{\infty}k\|x^{k+1}-x^k\|^2\leq C^2+2L^{-1}M\sum_{k=1}^{\infty}\mu_{k+1}(k+\alpha-1)\|\epsilon_k\|<\infty,$$
	which implies $\sum_{k=1}^{\infty}(k-1)\|x^{k}-x^{k-1}\|^2<\infty$.
	
	(ii). Define $\alpha_k$ as in \eqref{eq6}. Injecting $k+\alpha-1>k$ and \eqref{theorem3-6} to \eqref{theorem3-5}, we obtain
	$$\alpha_{k+1}-\alpha_k\leq2L^{-1}\mu_{k+1}(k+\alpha-1)W_k+2L^{-1}M\mu_{k+1}(k+\alpha-1)\|\epsilon_k\|.$$
	In view of \eqref{theorem3-3} and $\sum_{k=1}^{\infty}\mu_{k+1}(k+\alpha-1)\|\epsilon_k\|<\infty$, taking the positive part of the left-hand of the above inequality and by $\alpha_k\geq0$, we obtain that $\lim_{k\rightarrow\infty}\alpha_k$ exists by Lemma \ref{add2}, which is just the result in (ii).
	
	(iii). Based on the previous results, we can proceed as in the proof of Theorem \ref{theorem1} to obtain the results in Theorem \ref{theorem1} for the ISAPG algorithm.
	
	To obtain the sequence convergence of $\{x^k\}$,
	we follow the proof of Theorem \ref{theorem2} and only need to prove the existence of
	$\lim_{k\rightarrow\infty}\|x^k-x^*\|$ for all $x^*\in \arg\min_{\mathcal{X}}f$.
	Similarly, define $h_k$ as in \eqref{eq9} and we can obtain
	\begin{equation*}
\begin{aligned}
(h_{k+1}-h_k)_+\leq &\left(\frac{k-1}{k+\alpha-1}\right)(h_k-h_{k-1})_++2\|x^k-x^{k-1}\|^2 \\
&+4\kappa L^{-1}\mu_{k+1}^2
+2L^{-1}\mu_{k+1}\|\epsilon_k\|\|x^{k+1}-x^*\|.
\end{aligned}
	\end{equation*}

	Recalling the definition of $u^k$ in \eqref{prop1-11}, we get
	$$\|u^k-x^*\|=\left\|x^k+\left(\frac{k-1}{\alpha-1}\right)(x^k-x^{k-1})-x^*\right\|
	\geq\|x^k-x^*\|-\frac{k-1}{\alpha-1}\|x^k-x^{k-1}\|,$$
	which implies that the sequence $\{\|x^k-x^*\|\}$ is bounded by \eqref{theorem3-7} and \eqref{theorem3-8}.
	Therefore, also using Lemma \ref{lemma3} with $a_k=(h_{k}-h_{k-1})_+$, we have
	\begin{equation}\label{eq41}
	\sum_{k=1}^{\infty}(h_{k+1}-h_k)_+<\infty,
	\end{equation}
	which uses (i), \eqref{eq8} and $\sum_{k=1}^{\infty}\mu_{k+1}(k+\alpha-1)\|\epsilon_k\|<\infty$.
	Therefore, we complete the proof by the nonnegativity of $\{h_k\}$.
\end{proof}
\begin{remark}
	When $c$ in problem \eqref{ob} is a Lipschitz continuously differentiable convex function, Attouch and Peypouquet in \cite{Attouch-Peypouquet} studied the stability of the accelerated forward-backward algorithm with extrapolation $\frac{k-1}{k+\alpha-1}$, which enjoys fast convergence rate $o(k^{-2})$ on the objective function values and sequential convergence under $\sum_{k=1}^\infty k\|\epsilon_k\|<\infty$. In this paper, we only require $c$ to be a continuous convex function. Although the convergence rate of the objective function values is $o(\frac{\ln^{\sigma} k}{k})$ with any $\sigma\in(\frac{1}{2},1]$, the exact condition on the errors in Theorem \ref{stathe} is $\sum_{k=1}^\infty \ln^{-\sigma}(k+\alpha-1)\|\epsilon_k\|<\infty$,
	which is much weaker than it in \cite{Attouch-Peypouquet}.
\end{remark}

\section{Numerical experiments}\label{section4}
In this section, we present numerical results to show the good performance of the SAPG algprithm for solving \eqref{ob}. The numerical experiments are performed in Python 3.7.0 on a 64-bit Lenovo PC with an Intel(R) Core(TM) i7-10710U CPU @1.10GHz 1.61GHz and 16GB RAM.
We test two popular optimization models in practical applications: linear regression problem and censored regression problem. In this paper, we call the SAPG algorithm without extrapolation the smoothing proximal gradient (SPG) algorithm. In order to illustrate the acceleration effect of the SAPG algorithm, we make a comparison between the convergence rate of the SAPG algorithm and the SPG algorithm in each example.
The objective functions in the following examples have the form of \eqref{ob}, and the smoothing functions of the loss functions used in the following numerical experiments are defined in Example 3.1 of reference \cite{Bian-Chen-SINMU}.

In view of the convexity of the objective function in \eqref{ob}, $x^*$ is a minimizer of \eqref{ob} if and only if $x^*$ satisfies
$$x^*\in P_{\mathcal{X}}(x^*-\zeta\partial{f}(x^*))$$
with a number $\zeta>0$. Thus,
our stopping criterion is set as
\begin{equation}\label{stop1}
\mbox{number of iterations}>\mbox{Maxiter}
\end{equation}
or
\begin{equation}\label{stop2}
\|x^k-P_{\mathcal{X}}(x^k-\zeta\nabla\tilde{f}(x^k,\mu_k))\|_{\infty}\leq\epsilon\ \mbox{and}\ \mu_k\leq\epsilon,
\end{equation}
%\begin{subequations}\label{stop}
%	\begin{align}
%	&\mbox{number of iterations}>\mbox{Maxiter}\label{stop1}\\
%	\mbox{or}\quad &\|x^k-P_{\mathcal{X}}(x^k-\zeta\nabla\tilde{f}(x^k,\mu_k))\|_{\infty}\leq\epsilon\ \mbox{and}\ \mu_k\leq\epsilon,\label{stop2}
%	\end{align}
%\end{subequations}
where "Maxiter" is the given positive integer to indicate the number of iterations allowed,
and $\zeta\in\mathbb{R}_{++}$ is a given positive parameter. If $\epsilon=0$, \eqref{stop2} implies that $x^k$ is a minimizer of problem \eqref{ob}.
Namely, we stop the algorithm by \eqref{stop1} or \eqref{stop2}, i.e. the number of iterations exceeds Maxiter or the iterate $x^k$ is an $\epsilon$ minimizer of the problem.
The CPU time reported here in seconds does not include the time for data initialization. To guarantee the fairness of the comparison, we use same parameters and initial point in two algorithms.
The values of parameters in the numerical experiments are chosen as follows:
\begin{equation*}
\mbox{Maxiter}=15000,\ \epsilon =10^{-3},\ \zeta=3\times 10^{-3},
%\ \mu_0=0.8,\ \gamma_0=1,\ \eta=\frac{1}{2},\ \alpha=4,\ \sigma=\frac{3}{4}.
\end{equation*}
and
\begin{equation*}
%\mbox{Maxiter}=1.5\times10^{4},\ \epsilon =10^{-3},\ \zeta=3\times 10^{-3},
\mu_0=0.8,\ \gamma_0=1,\ \eta=\frac{1}{2},\ \alpha=4,\ \sigma=\frac{3}{4}.
\end{equation*}

For simplicity, we use Time, Iter and Spar to represent the CPU time in seconds, the number of iterations and the sparsity level of the true solution for generating data. Moreover, we set $s=\mbox{Spar}\ast n$, which means that there are at most $s$ elements of the generated solution are nonzero. The initial point is chosen as $x^0=0.1\ast \mathbf{1}_n$, where $\mathbf{1}_n$ denotes the vector of all ones.

Moreover, we use $f_{\min}$ to denote the minimum of the two objective values at the stopped iterates obtained from the SAPG algorithm and the SPG algorithm in the following examples.

\begin{example}\label{exa1}
	We consider the following $\ell_1$ penalized linear regression problem with $\ell_1$ loss function:
	
	\begin{equation}\label{e1}
	\min_{\mathbf{0}\leq x\leq\mathbf{1}} \|Ax-b\|_1 + 0.01\|x\|_1,
	\end{equation}
	where $A\in\mathbb{R}^{m\times n}$ with $m<n$, $b\in\mathbb{R}^m$.
	For a given group of $(m,n,\mbox{Spar})$, the codes of generating random data in \eqref{e1} are
	\begin{center}
		\tt{\text{$B=\mbox{np.random.randn}(m,n);\ A=\mbox{orth}(B\mbox{.T})\mbox{.T};\ s=\mbox{Spar}*n;$}}\\
		\tt{\text{$x^{\star}=\mbox{np.random.uniform}(0,1,(n,1));\ x^\star[:n-s]=0;\ \mbox{np.random.shuffle}(x^\star);$}}\\
		\tt{\text{ $\bar{b}=A\mbox{.dot}(x^\star);\ b=\bar{b}+0.01\ast \mbox{np.random.rand}(\bar{b}\mbox{.shape}[0], \bar{b}\mbox{.shape}[1]).$}}
	\end{center}
		
In the numerical experiments, the smoothing function of the $\ell_1$ loss function is chosen as below \cite{Chen-MP}:
	\begin{equation*}
	\begin{aligned}
	\tilde{f}(x,\mu)=\sum_{i=1}^m\tilde{\theta}(A_i x-b_i, \mu)
	\end{aligned}
	\quad\mbox{with}\quad
	\tilde{\theta}(z,\mu)=\left\{
	\begin{aligned}
	&|z|  &\mbox{if}~ |z|>\mu, \\
	&\frac{z^2}{2\mu}+\frac{\mu}{2}  &\mbox{if}~ |z|\leq \mu.
	\end{aligned}
	\right.
	\end{equation*}	
	
	The number of iterations and CPU time are taken into consideration to illustrate the performance of the SAPG algorithm and the SPG algorithm. We compare the two algorithms by setting different dimensions of $A$ and the sparsity levels of $x^\star$.
	By running 50 independent trials for each $(m, n, \mbox{Spar})$,
	the average values of the numerical results for finding an $\epsilon$ minimizer of problem \eqref{e1} defined by \eqref{stop2} are recorded in Table \ref{tab1}.
	We see that in Table \ref{tab1} the average numbers of iterations and CPU time cost by the SAPG algorithm are smaller than that spent by the SPG algorithm for each case, which means the SAPG algorithm performs better than the SPG algorithm for problem \eqref{e1}.
	
	\begin{table}[htbp]
		{\footnotesize
			\caption{The average computational cost for Example 4.1 with different $m$, $n$ and $\mbox{Spar}$.}\label{tab1}
			\begin{center}
				\begin{tabular}{|c|c|c|c|c|c|c|c|c|c|} \hline
					
					\multicolumn{2}{|c|}{Methods} & SAPG & SPG& SAPG & SPG & SAPG & SPG & SAPG & SPG \\   \hline
					
					\multicolumn{2}{|c|}{\diagbox{Spar}{Costs}{$(m,n)$}}	& \multicolumn{2}{c|}{$(150,300)$} &  \multicolumn{2}{c|}{$(300,600)$} & \multicolumn{2}{c|}{$(450,900)$}& \multicolumn{2}{c|}{$(600,1200)$}\\ \hline
					
					\multirow{2}{*}{$20 \%$}& Iter & $\mathbf{223}$ & 251 & $\mathbf{223}$ & 247 & $\mathbf{223}$& 243& $\mathbf{223}$& 245 \\ %\hline
					\cline{2-10}
					&   Time  &  $\mathbf{0.0592}$ & 0.0647 & $\mathbf{0.1027}$ & 0.1152 & $\mathbf{0.1664}$& 0.1860& $\mathbf{0.2349}$& 0.2660 \\ \hline
					
					\multirow{2}{*}{$30 \%$}    & Iter & $\mathbf{223}$ & 317 & $\mathbf{223}$ & 413 & $\mathbf{223}$& 492& $\mathbf{223}$& 480 \\ %\hline
					\cline{2-10}
					&Time  & $\mathbf{0.0585}$ & 0.0816 & $\mathbf{0.1039}$ & 0.1940 & $\mathbf{0.1667}$& 0.3751 & $\mathbf{0.2369}$& 0.5377\\ \hline
					
					\multirow{2}{*}{$40 \%$}    & Iter & $\mathbf{223}$ & 777 & $\mathbf{223}$ & 875 & $\mathbf{223}$& 897& $\mathbf{223}$& 886 \\ %\hline
					\cline{2-10}
					& Time & $\mathbf{0.0584}$ & 0.1979 & $\mathbf{0.1028}$ & 0.4070 & $\mathbf{0.1699}$& 0.7172& $\mathbf{0.2344}$& 1.0081 \\ \hline
					
					\multirow{2}{*}{$50 \%$}    & Iter & $\mathbf{223}$ & 911 & $\mathbf{223}$ & 1343 & $\mathbf{223}$& 1622 & $\mathbf{223}$& 1800\\ %\hline
					\cline{2-10}
					& Time & $\mathbf{0.0569}$ & 0.2299 & $\mathbf{0.1035}$ & 0.6282 & $\mathbf{0.1712}$& 1.3444 & $\mathbf{0.2523}$& 2.3463\\ \hline
				\end{tabular}
			\end{center}
		}
	\end{table}
	
	For $(m,n)=(300,600)$ and $(m,n)=(600,1200)$ with two different sparsity levels $\mbox{Spar}=30 \%$ and $\mbox{Spar}=50 \%$, the corresponding results, that are $f(x^k)-f_{\min}$ versus the number of iterations $k$, are plotted in Figure \ref{fig1} and Figure \ref{fig2}, respectively. Seeing Figure \ref{fig1} and Figure \ref{fig2}, one can clearly find that the SAPG algorithm can find a more accurate solution with much fewer iterations than the SPG algorithm.
	
	\begin{figure}[htbp]
		\centerline{
			% Use the relevant command to insert your figure file.
			% For example, with the graphicx package use
			\subfigure[$(m,n,\mbox{Spar})=(300,600,30\%)$.]{\label{fig1.a}\includegraphics[width=0.55\textwidth]{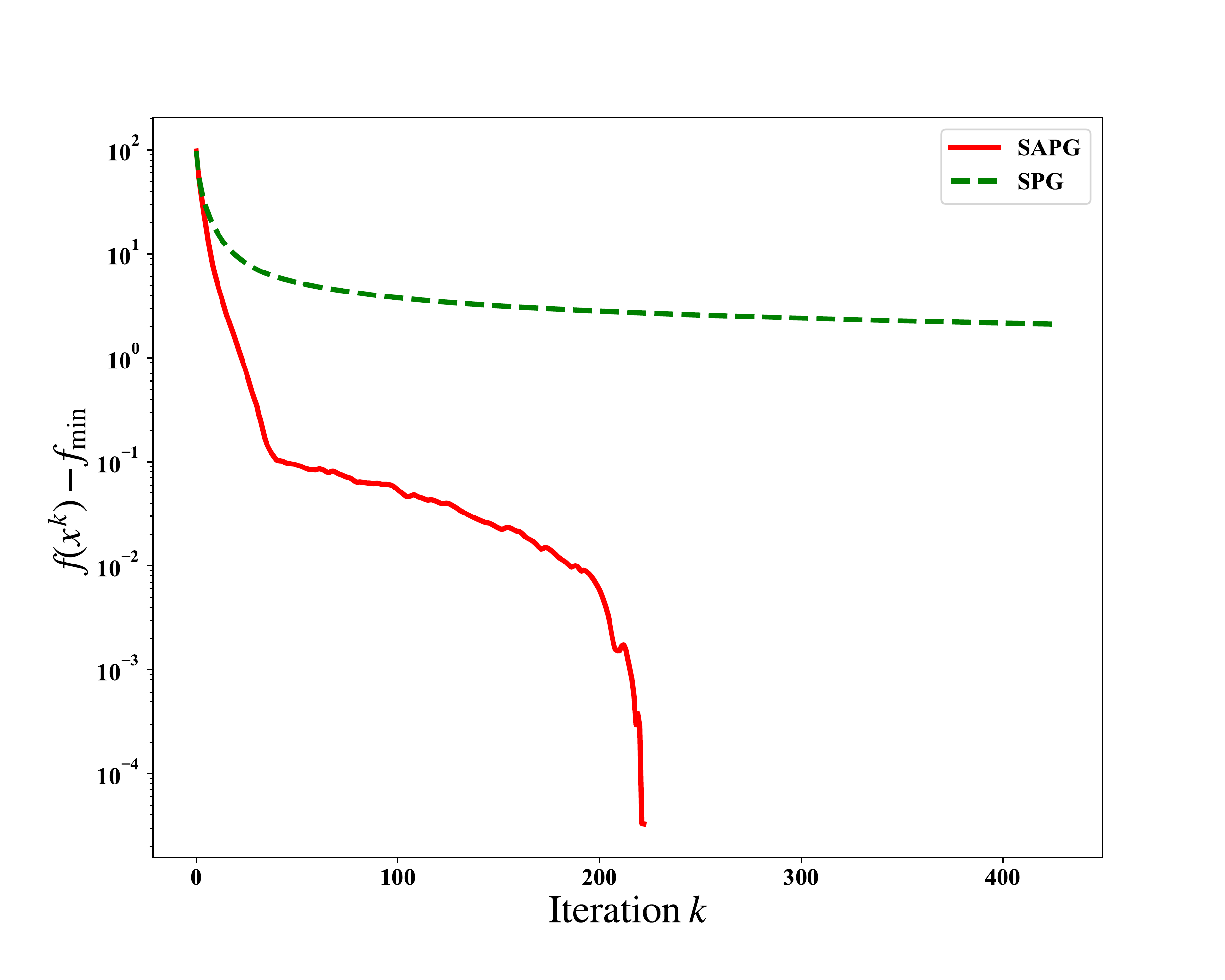}}
			\subfigure[$(m,n,\mbox{Spar})=(300,600,50\%)$.]{\label{fig1.b}\includegraphics[width=0.55\textwidth]{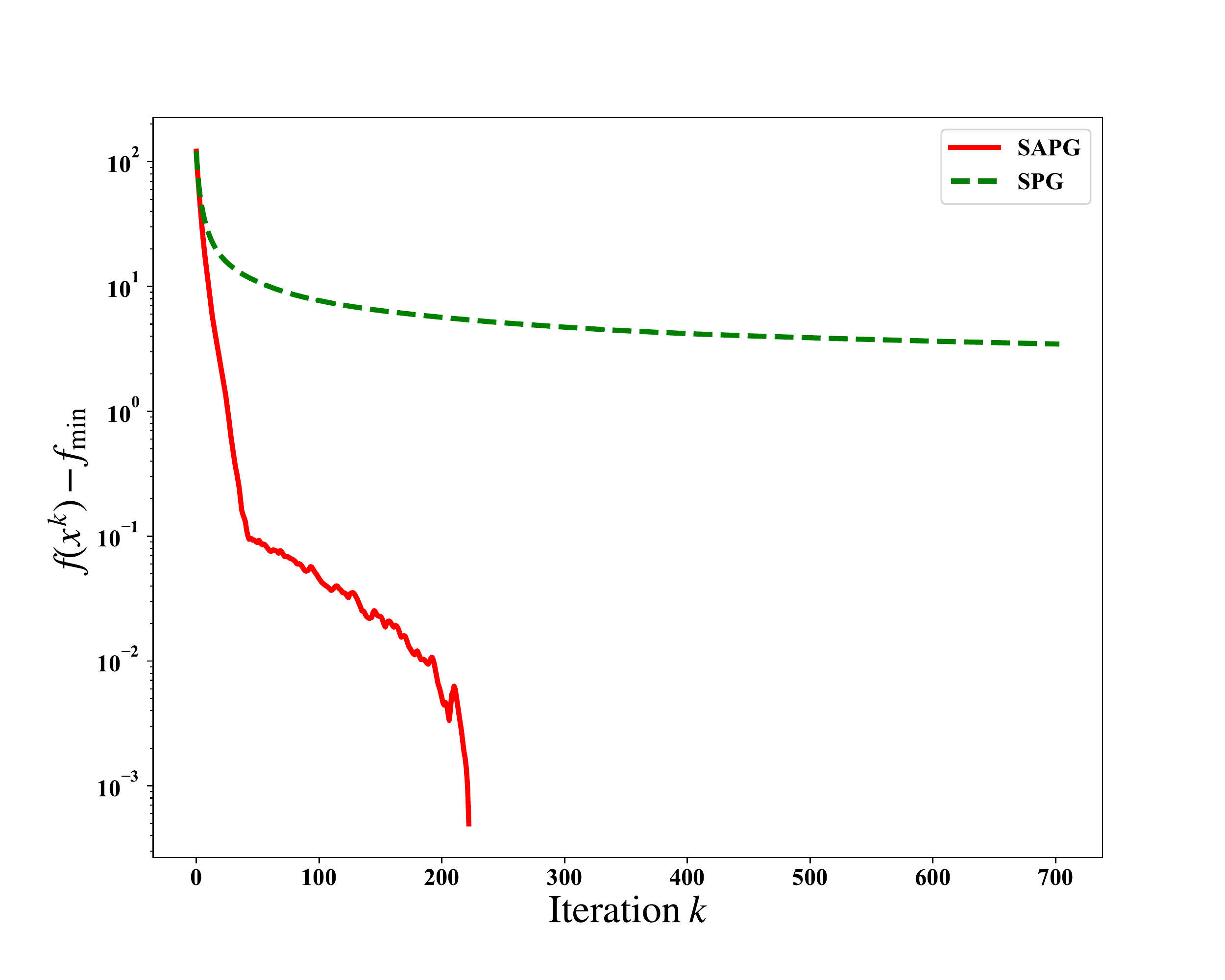}}
			% figure caption is below the figure
		}
		\caption{Convergence of $\{f(x^k)-f_{\min}\}$ for Example \ref{exa1} with $m=300$, $n=600$ and different sparsity levels.}
		\label{fig1}       % Give a unique label
	\end{figure}
	
	\begin{figure}[htbp]
		\centerline{
			% Use the relevant command to insert your figure file.
			% For example, with the graphicx package use
			\subfigure[$(m,n,\mbox{Spar})=(600,1200,30\%)$.]{\label{fig1.a}\includegraphics[width=0.55\textwidth]{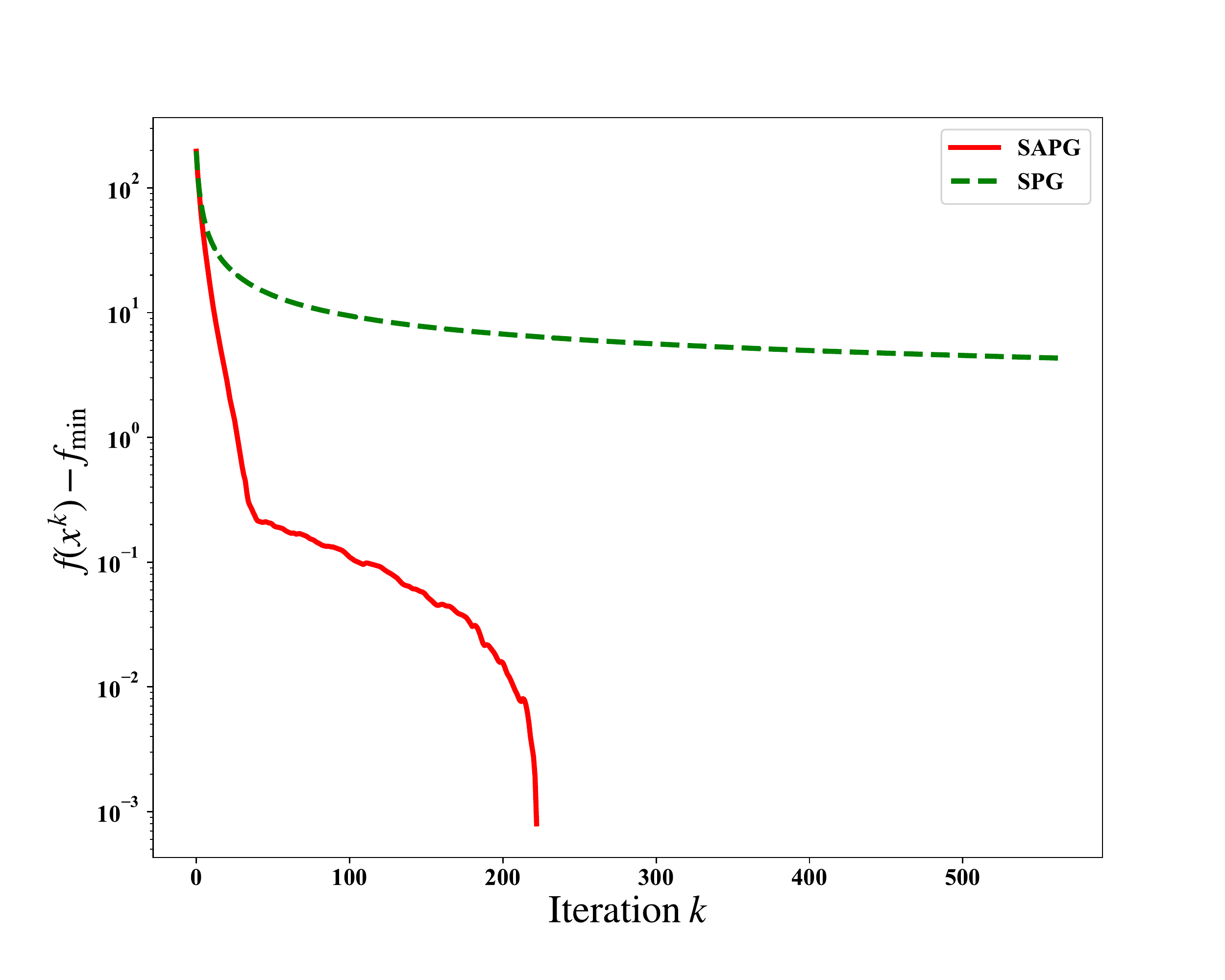}}
			\subfigure[$(m,n,\mbox{Spar})=(600,1200,50\%)$.]{\label{fig1.b}\includegraphics[width=0.55\textwidth]{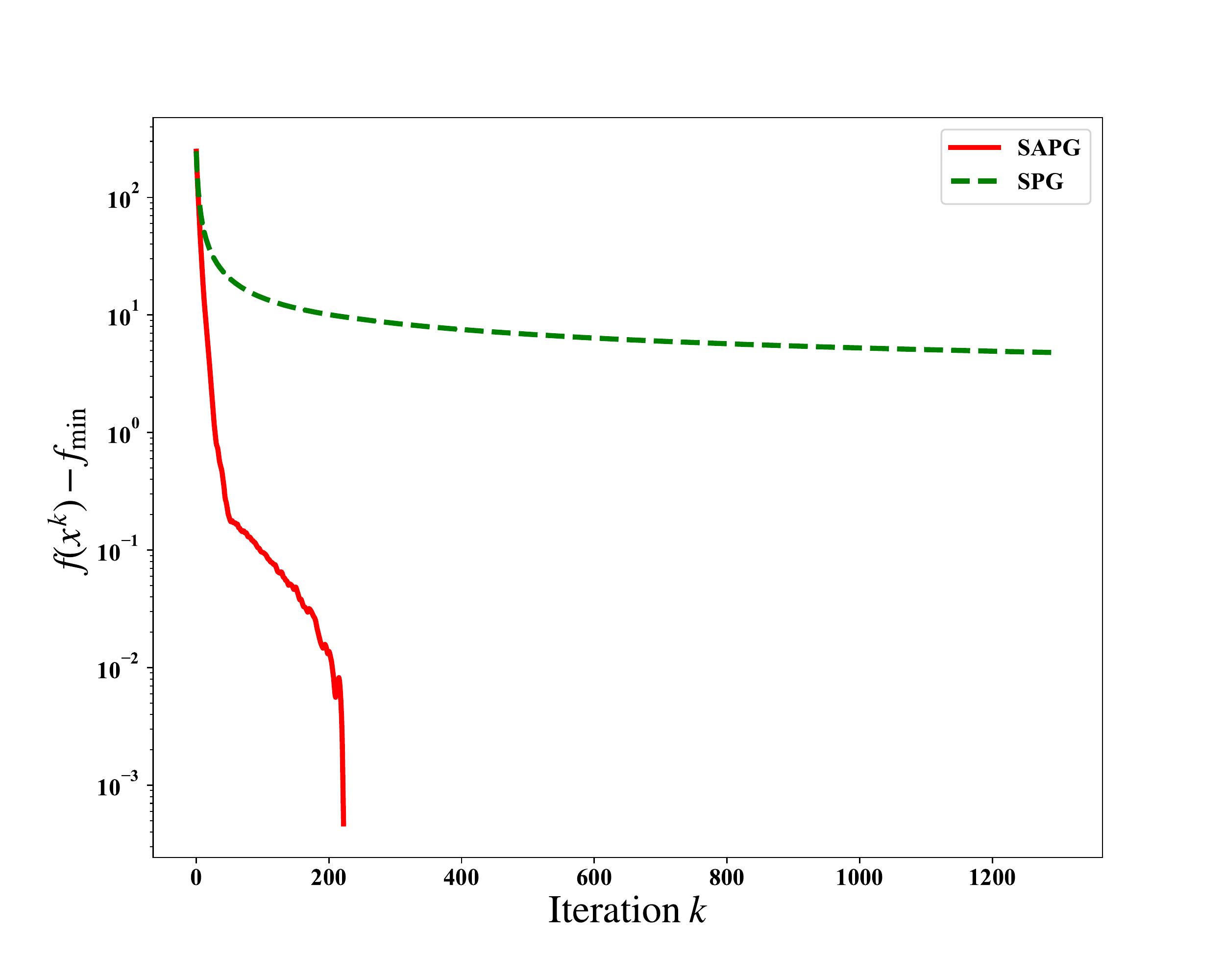}}
			% figure caption is below the figure
		}
		\caption{Convergence of $\{f(x^k)-f_{\min}\}$ for Example \ref{exa1} with $m=600$, $n=1200$ and different sparsity levels.}
		\label{fig2}       % Give a unique label
	\end{figure}

\end{example}

\begin{example}\label{exa2}
	We consider the following $\ell_1$ penalized censored regression problem:
	\begin{equation}\label{e2}
	\min_{\mathbf{0}\leq x\leq\mathbf{1}} \|\max\{Ax,0\}-b\|_1 + 0.01\|x\|_1,
	\end{equation}
	where the loss function is defined in \eqref{max-l1} with $q=1$. For a given group of $(m,n,\mbox{Spar})$, the data in this example is generated as follows:
	\begin{center}
		\tt{\text{$B=\mbox{np.random.randn}(m,n);\ A=\mbox{orth}(B\mbox{.T})\mbox{.T};\ s=\mbox{Spar}*n;$}}\\
		\tt{\text{$x^{\star}=\mbox{np.random.uniform}(0,1,(n,1));\ x^\star[:n-s]=0;\ \mbox{np.random.shuffle}(x^\star);$}}\\
		\tt{\text{$\bar{b}=A\mbox{.dot}(x^\star);\ \mbox{per}=0.01\ast \mbox{np.random.rand}(\bar{b}\mbox{.shape}[0], \bar{b}\mbox{.shape}[1]);$}}\\
		\tt{\text{ $ b=\mbox{np.maximum}(\bar{b}+\mbox{per}, \mbox{np.zeros}(\bar{b}\mbox{.shape})).$}}
	\end{center}
	
	 A smoothing function of the loss function in \eqref{e2} satisfying Definition \ref{defn1} can be defined by \cite{Chen-MP}
	\begin{equation*}
	\begin{aligned}
	\tilde{f}(x,\mu)=\sum_{i=1}^m\tilde{\theta}(\tilde{\phi}(A_i x, \mu)-b_i, \mu)
	\end{aligned}
	\quad\mbox{with}\quad
	\tilde{\phi}(z,\mu)=\left\{
	\begin{aligned}
	&\max\{z,0\} &\mbox{if}~ |z|>\mu, \\
	&\frac{(z+\mu)^2}{4\mu} &\mbox{if}~ |z|\leq \mu.
	\end{aligned}
	\right.
	\end{equation*}
	
	For each fixed $(m,n,\mbox{Spar})$, we also randomly and independently generate 50 sets of data. In Table \ref{tab2}, we report the average values of iterations and CPU time for these 50 independent tests. We can see that the SAPG algorithm also performs better for problem \eqref{e2} than the SPG algorithm in the sense that the SAPG algorithm needs less iterations and CPU time. From the comparisons between the SAPG algorithm and SPG algorithm in Figure \ref{fig3} and Figure \ref{fig4}, we can also observe that
	the SAPG algorithm significantly outperforms the SPG algorithm in terms of convergence rate when solving problem \eqref{e2} with different dimensions and sparsity levels.
	
	\begin{table}[htbp]
		{\footnotesize
			\caption{The average computational cost for Example 4.2 with different $m$, $n$ and $\mbox{Spar}$.}\label{tab2}
			\begin{center}
				\begin{tabular}{|c|c|c|c|c|c|c|c|c|c|} \hline
					
					\multicolumn{2}{|c|}{Methods} & SAPG & SPG& SAPG & SPG & SAPG & SPG & SAPG & SPG \\   \hline
					
					\multicolumn{2}{|c|}{\diagbox{Spar}{Costs}{$(m,n)$}}	& \multicolumn{2}{c|}{$(1000,200)$} &  \multicolumn{2}{c|}{$(2000,400)$} & \multicolumn{2}{c|}{$(4000,800)$}& \multicolumn{2}{c|}{$(8000,1600)$}\\ \hline
					
					\multirow{2}{*}{$20 \%$}& Iter & $\mathbf{223}$ & 250 & $\mathbf{223}$ & 269 & $\mathbf{223}$& 248& $\mathbf{223}$& 289 \\ %\hline
					\cline{2-10}
					&   Time  &  $\mathbf{0.0599}$ & 0.0674 & $\mathbf{0.1268}$ & 0.1558 & $\mathbf{0.2608}$& 0.3052& $\mathbf{1.7238}$& 2.3139 \\ \hline
					
					\multirow{2}{*}{$30 \%$}    & Iter & $\mathbf{223}$ & 434 & $\mathbf{223}$ & 433 & $\mathbf{223}$& 451& $\mathbf{223}$& 576 \\ %\hline
					\cline{2-10}
					&Time  & $\mathbf{0.0607}$ & 0.1150 & $\mathbf{0.1266}$ & 0.2497 & $\mathbf{0.2617}$& 0.5598 & $\mathbf{1.7295}$& 4.6168\\ \hline
					
					\multirow{2}{*}{$40 \%$}    & Iter & $\mathbf{223}$ & 502 & $\mathbf{223}$ & 787 & $\mathbf{223}$& 917& $\mathbf{223}$& 1162 \\ %\hline
					\cline{2-10}
					& Time & $\mathbf{0.0591}$ & 0.1297 & $\mathbf{0.1301}$ & 0.4542 & $\mathbf{0.2752}$& 1.2490& $\mathbf{1.7386}$& 9.4325 \\ \hline
					
					\multirow{2}{*}{$50 \%$}    & Iter & $\mathbf{223}$ & 1034 & $\mathbf{223}$ & 1236 & $\mathbf{223}$& 1819 & $\mathbf{223}$& 2327\\ %\hline
					\cline{2-10}
					& Time & $\mathbf{0.0615}$ & 0.2661 & $\mathbf{0.1246}$ & 0.6788 & $\mathbf{0.2640}$& 2.4045 & $\mathbf{1.7240}$& 18.4405\\ \hline
				\end{tabular}
			\end{center}
		}
	\end{table}
	
	\begin{figure}[htbp]
		\centerline{
			% Use the relevant command to insert your figure file.
			% For example, with the graphicx package use
			\subfigure[$(m,n,\mbox{Spar})=(2000,400,30\%)$.]{\label{fig1.a}\includegraphics[width=0.55\textwidth]{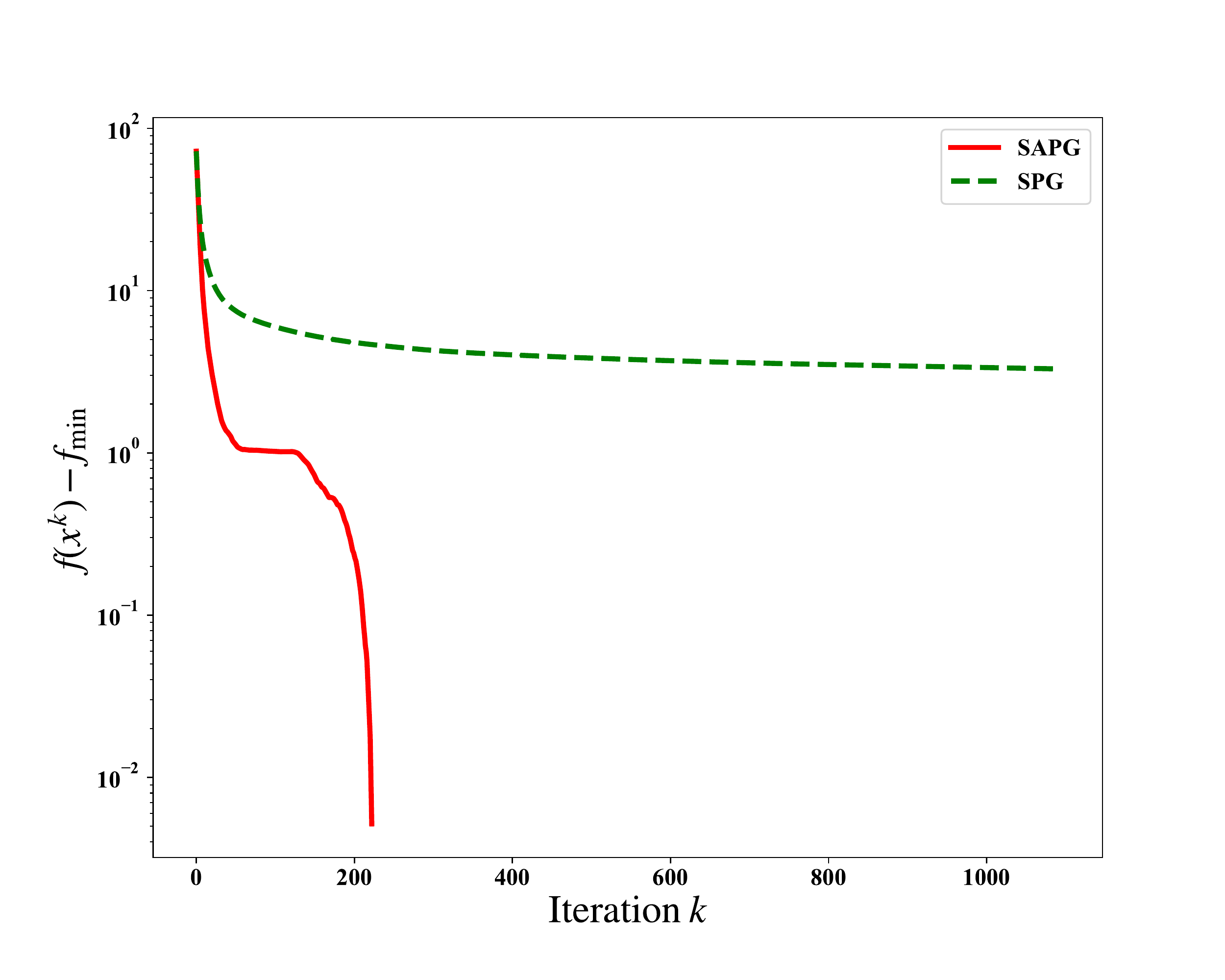}}
			\subfigure[$(m,n,\mbox{Spar})=(2000,400,50\%)$.]{\label{fig1.b}\includegraphics[width=0.55\textwidth]{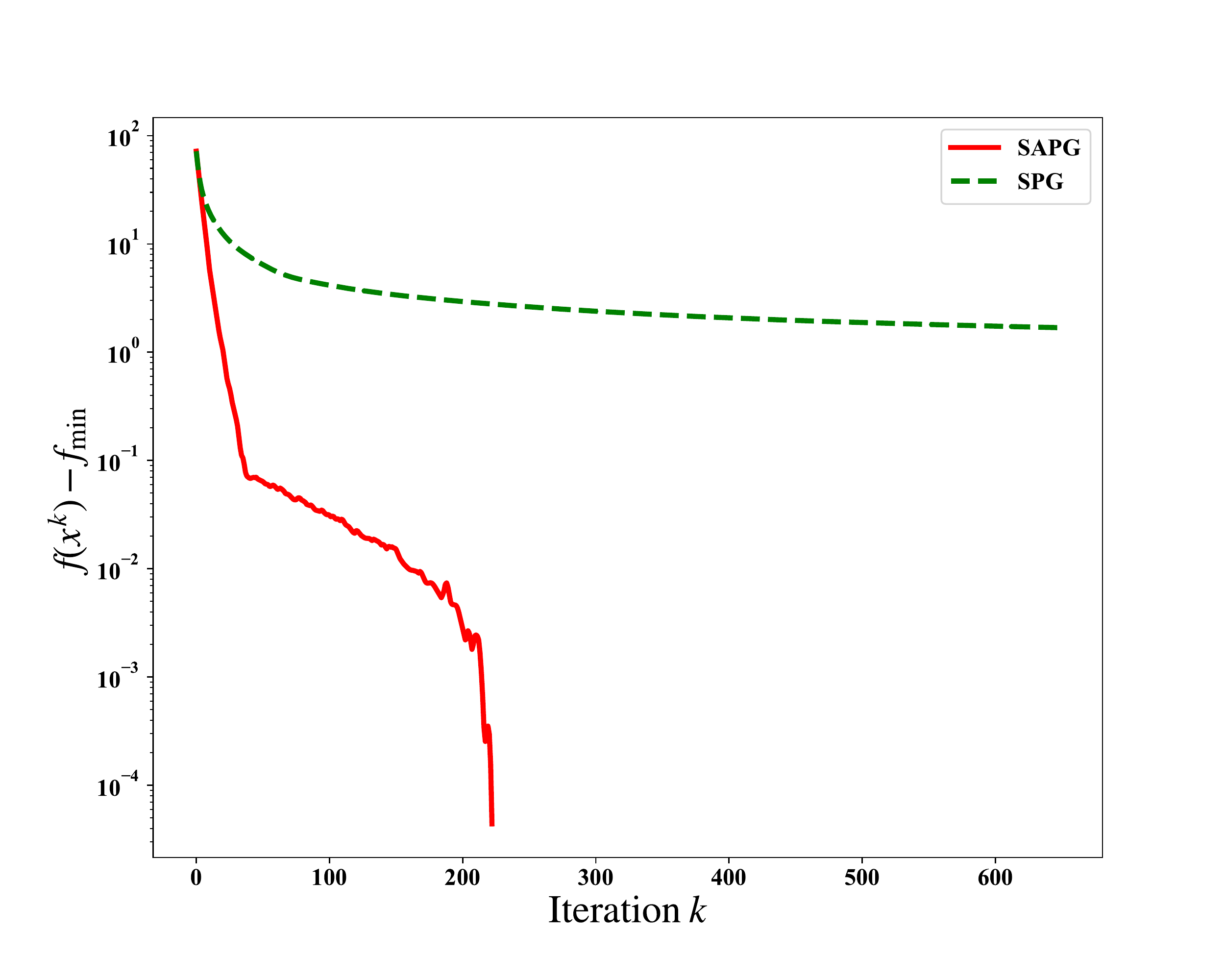}}
			% figure caption is below the figure
		}
		\caption{Convergence of $\{f(x^k)-f_{\min}\}$ for Example \ref{exa2} with $m=2000$, $n=400$ and different sparsity levels.}
		\label{fig3}       % Give a unique label
	\end{figure}
	
	\begin{figure}[htbp]
		\centerline{
			% Use the relevant command to insert your figure file.
			% For example, with the graphicx package use
			\subfigure[$(m,n,\mbox{Spar})=(8000,1600,30\%)$.]{\label{fig1.a}\includegraphics[width=0.55\textwidth]{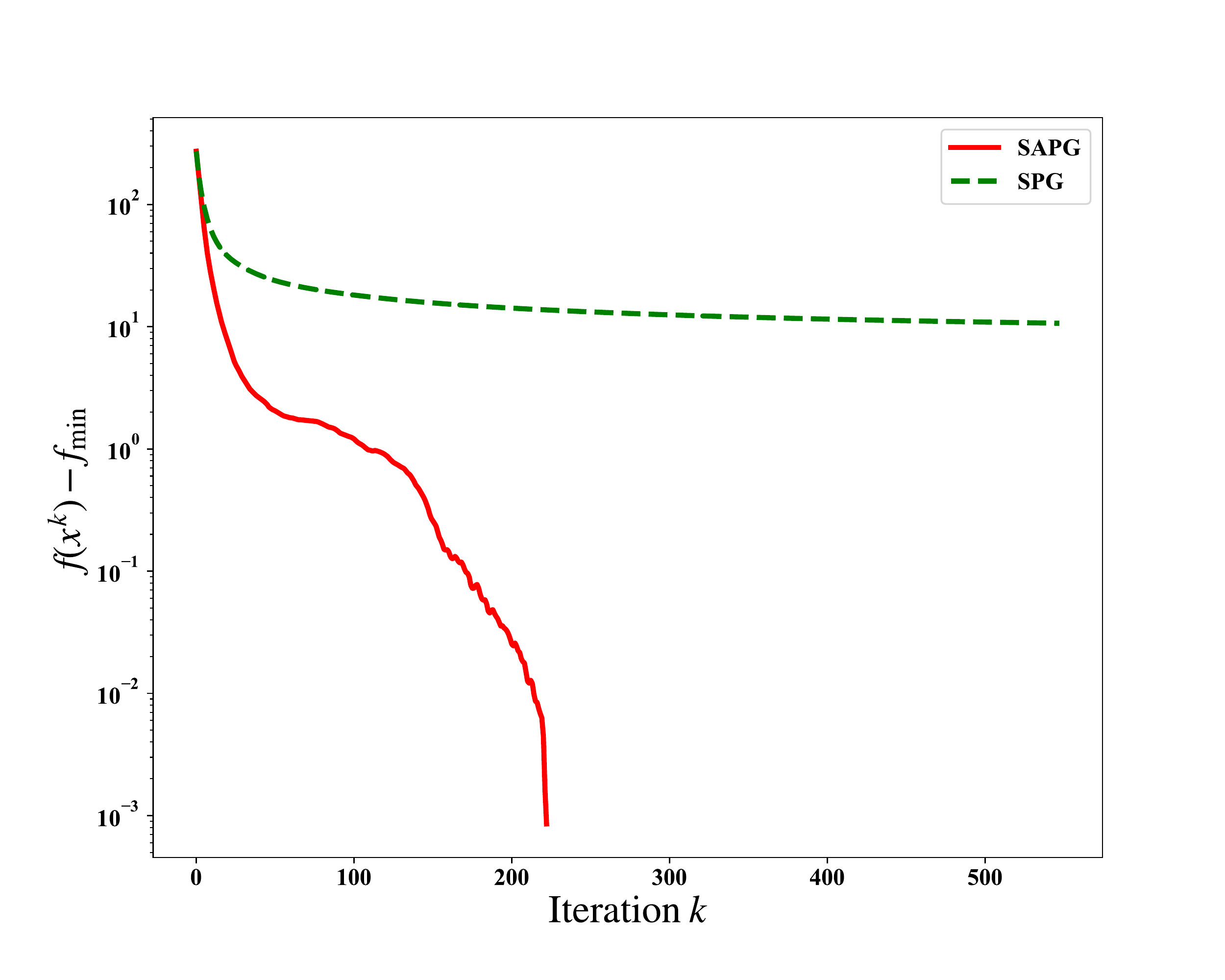}}
			\subfigure[$(m,n,\mbox{Spar})=(8000,1600,50\%)$.]{\label{fig1.b}\includegraphics[width=0.55\textwidth]{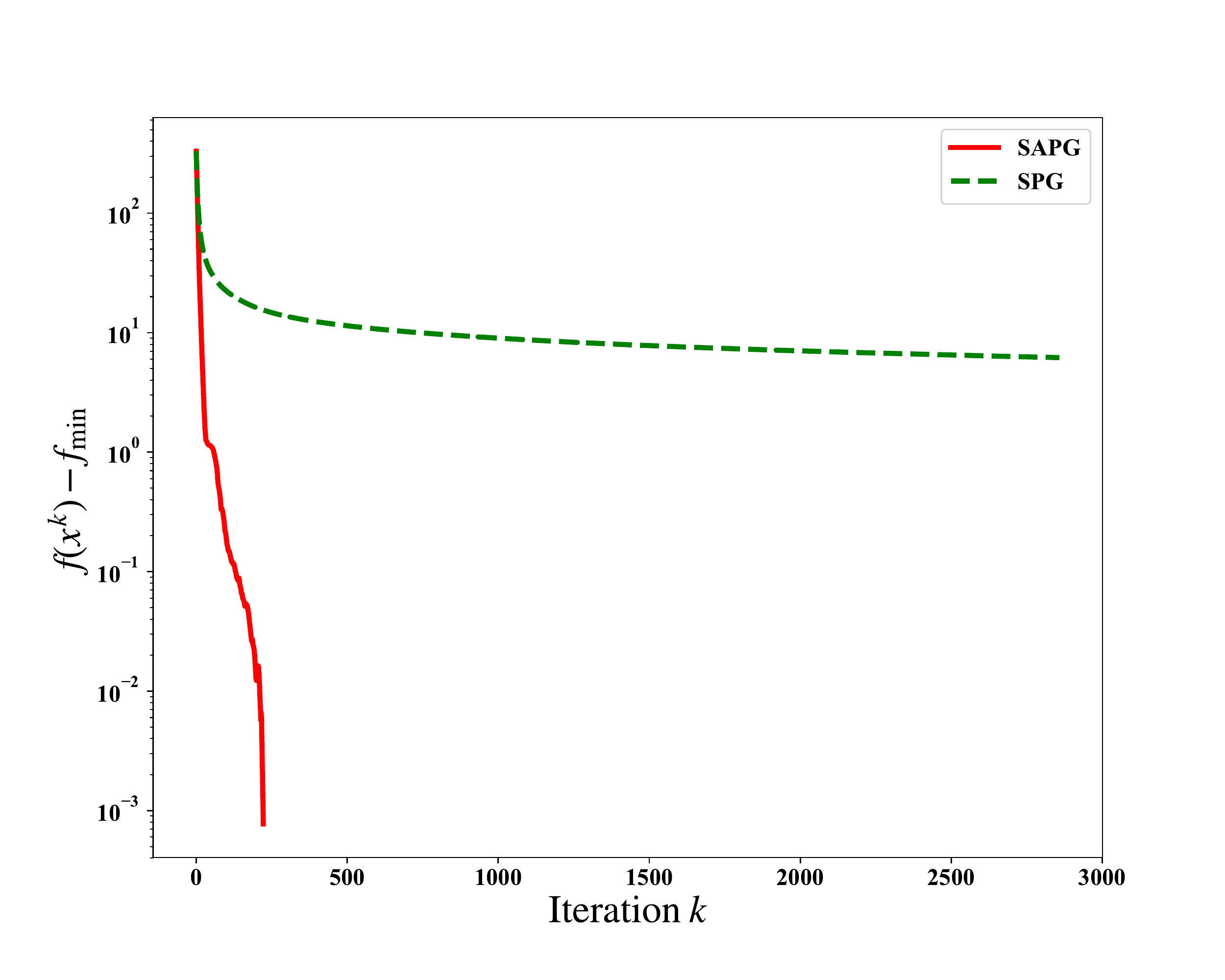}}
			% figure caption is below the figure
		}
		\caption{Convergence of $\{f(x^k)-f_{\min}\}$ for Example \ref{exa2} with $m=8000$, $n=1600$ and different sparsity levels.}
		\label{fig4}       % Give a unique label
	\end{figure}
\end{example}

From the numerical results in Example \ref{exa1} and Example \ref{exa2}, besides the faster convergence rate of the SAPG algorithm than the SPG algorithm, we have the following two observations.
\begin{itemize}
	\item [\rm (i)] From Tables \ref{tab1}-\ref{tab2}, we can see that the iteration numbers and CPU time of the SAPG algorithm are stable for all cases, while they are increasing as the sparsity is increasing for the SPG algorithm.
	This indicates that the superiority of the SAPG algorithm is highlighted when the sparsity level is large. It is surprising that the iteration number is 223 for all cases. We would like explain that the reason is that the value of $\|x^k-P_{\mathcal{X}}(x^k-\zeta\nabla\tilde{f}(x^k,\mu_k))\|_{\infty}$ generated by the SAPG algorithm decreases rapidly, and the main work of the latter iterations is to update the smoothing parameter $\mu_k$ such that $\mu_k\leq\epsilon$ and the updating method for $\mu_k$ is same for all cases of Example \ref{exa1} and Example \ref{exa2}.
	
	\item [\rm (ii)] From Figures \ref{fig1}-\ref{fig4}, the objective function values of the iterate obtained by SAPG algorithm are much smaller than those by the SPG algorithm, which shows that the SAPG algorithm can find a better $\epsilon$ minimizer with fewer iterations.
\end{itemize}

\section{Conclusions}
In this paper we develop a novel efficient smoothing accelerated proximal gradient (SAPG) algorithm for solving the constrained nonsmooth convex optimization problem modeled by \eqref{ob}, where the objective function is the sum of a continuous convex function (not necessarily smooth) and a proper closed convex function. The update method of the smoothing parameter is the essential for the convergence properties of the proposed algorithm. At each iteration, we employ the accelerated proximal gradient with extrapolation coefficient $\frac{k-1}{k+\alpha-1}$ to minimize problem \eqref{sob} with a fixed smoothing parameter. We prove that the global convergence rate of the objective function values is $o(\ln^{\sigma}k/k)$ with any $\sigma\in(\frac{1}{2}, 1]$. In addition, we show that sequence $\{x^k\}$ converges to an optimal solution of problem \eqref{ob}. Further, we propose an inexact smoothing accelerated proximal gradient (ISAPG) algorithm by introducing an error or perturbation term in the SAPG algorithm. We obtain
the fact that the convergence results of the ISAPG algorithm with appropriate perturbations are parallel to that of the SAPG algorithm.

%\paragraph{Paragraph headings} Use paragraph headings as needed.
%\begin{equation}
%a^2+b^2=c^2
%\end{equation}

% For one-column wide figures use
%\begin{figure}
% Use the relevant command to insert your figure file.
% For example, with the graphicx package use
%  \includegraphics{example.eps}
% figure caption is below the figure
%\caption{Please write your figure caption here}
%\label{fig:1}       % Give a unique label
%\end{figure}
%
% For two-column wide figures use
%\begin{figure*}
% Use the relevant command to insert your figure file.
% For example, with the graphicx package use
%  \includegraphics[width=0.75\textwidth]{example.eps}
% figure caption is below the figure
%\caption{Please write your figure caption here}
%\label{fig:2}       % Give a unique label
%\end{figure*}
%
% For tables use

%\begin{table}
% table caption is above the table
%\caption{Please write your table caption here}
%\label{tab:1}       % Give a unique label
% For LaTeX tables use

%\begin{tabular}{lll}
%\hline\noalign{\smallskip}
%first & second & third  \\
%\noalign{\smallskip}\hline\noalign{\smallskip}
%number & number & number \\
%number & number & number \\
%\noalign{\smallskip}\hline
%\end{tabular}
%\end{table}

\begin{acknowledgements}
This work is funded by the National Science Foundation of China (Nos: 11871178).
%If you'd like to thank anyone, place your comments here
%and remove the percent signs.
\end{acknowledgements}

% Authors must disclose all relationships or interests that
% could have direct or potential influence or impart bias on
% the work:
%
% \section*{Conflict of interest}
%
% The authors declare that they have no conflict of interest.

% BibTeX users please use one of
%\bibliographystyle{spbasic}      % basic style, author-year citations
\bibliographystyle{spmpsci}      % mathematics and physical sciences
%\bibliographystyle{spphys}       % APS-like style for physics
%\bibliography{}   % name your BibTeX data base
\bibliography{references}
% Non-BibTeX users please use

%\begin{thebibliography}{}
%
% and use \bibitem to create references. Consult the Instructions
% for authors for reference list style.
%
%\bibitem{RefJ}
% Format for Journal Reference
%Author, Article title, Journal, Volume, page numbers (year)
% Format for books
%\bibitem{RefB}
%Author, Book title, page numbers. Publisher, place (year)
% etc
%\end{thebibliography}

\end{document}